\newcommand\redout{\bgroup\markoverwith
{\textcolor{red}{\rule[.5ex]{2pt}{0.4pt}}}\ULon}
\newcommand\cyr{%
\renewcommand\rmdefault{wncyr}%
\renewcommand\sfdefault{wncyss}%
\renewcommand\encodingdefault{OT2}%
\normalfont
\selectfont}
\DeclareTextFontCommand{\textcyr}{\cyr}
\newtheorem{theorem}{Theorem}[section]
\newtheorem{lemma}[theorem]{Lemma}
\newtheorem{proposition}[theorem]{Proposition}
\newtheorem{corollary}[theorem]{Corollary}
\newtheorem{question}[theorem]{Question}
\newtheorem{conjecture}[theorem]{Conjecture}
\theoremstyle{definition}
\newtheorem{definition}[theorem]{Definition}
\newtheorem{remark}[theorem]{Remark}
\theoremstyle{remark}
\newtheorem{example}[theorem]{Example}
\newcommand{\fm}{\frak{m}}
\newcommand{\m}{\frak{m}}
\begin{document}

\title[The first tight Hilbert coefficient]{The vanishing of the first tight Hilbert coefficient for Buchsbaum rings}
\author[Duong Thi Huong]{Duong Thi Huong}
\address{Department of Mathematics, Thang Long University, Hanoi, Vietnam}
\email{huongdt@thanglong.edu.vn}

\author[Nguyen Tuan Long]{Nguyen Tuan Long}
\address{Department of Mathematics, National Economics University, Hanoi, Vietnam}
\email{ntlong81@gmail.com}

\author[Pham Hung Quy]{Pham Hung Quy${}^*$}
\address{Department of Mathematics, FPT University, Hanoi, Vietnam}
\email{quyph@fe.edu.vn}
\thanks{$^*$Corresponding author: Pham Hung Quy
}

\thanks{This work is partially supported by a fund of Vietnam National Foundation for Science
and Technology Development (NAFOSTED) under grant number 101.04-2023.08.}

\maketitle

\begin{abstract}
We prove that if the first tight Hilbert coefficient vanishes then ring is $F$-rational provided it is a Buchsbaum local ring satisfying the $(S_2)$ condition.  \\
{\em 2020 Mathematics Subject Classification}: 13A35, 13H10, 13D40, 13D45.\\
Keywords: {tight Hilbert coefficients, Buchsbaum rings.}

\end{abstract}

\maketitle

\thispagestyle{empty}

\section{Introduction}
Let $(R,\m)$ be a Noetherian local ring of dimension $d$ such that $\widehat{R}$ is reduced and let $I\subseteq R$ be an $\m$-primary ideal. Then for $n\gg0$, $\ell(R/\overline{I^{n+1}})$ agrees with a polynomial in $n$ of degree $d$, and we have integers $\overline{e}_0(I),\dots, \overline{e}_d(I)$ such that
$$\ell(R/\overline{I^{n+1}})=\overline{e}_0(I)\binom{n+d}{d} - \overline{e}_1(I)\binom{n+d-1}{d-1} + \cdots + (-1)^d\overline{e}_d(I).$$
These integers $\overline{e}_i(I)$ are called the normal Hilbert coefficients of $I$.

It is well-known that $\overline{e}_0(I)$ is the Hilbert-Samuel multiplicity of $I$, which is always a positive integer. The first coefficient $\overline{e}_1(I)$ is sometimes called the normal Chern coefficient of $I$. It was proved by Goto-Hong-Mandal \cite{GHM} that when $\widehat{R}$ is unmixed, $\overline{e}_1(I)\geq 0$ for all $\m$-primary ideals $I\subseteq R$ (which answers a question posed by Vasconcelos \cite{V}). If $(R,\m)$ is a Noetherian local ring such that $\widehat{R}$ is reduced and $(S_2)$, then $\overline{e}_1(Q)=0$ for some parameter ideal $Q\subseteq R$ if and only if $R$ is regular and $\nu(\m/Q)\leq 1$, see \cite{MQ24}.

In \cite{GMV}, it was shown that when $R$ is reduced ring of characteristic $p>0$, for $n\gg0$, the function $\ell(R/(I^{n+1})^*)$ also agrees with a polynomial of degree $d$, and one can define the tight Hilbert coefficients $e_0^*(I),\dots, e_d^*(I)$ in a similar way (see Section 2 for more details). It is easy to see that $\overline{e}_1(I)\geq e_1^*(I)$. Recently, Ma-Quy proved that $e_1^*(Q)\geq 0$ for any parameter ideal $Q\subseteq R$ under mild assumptions, see \cite[Theorem 1.2]{MQ24}. The following conjecture was asked by Huneke in \cite{DQV, MQ24}.
\begin{conjecture} If $R$ is excellent and has characteristic $p>0$ (such that $\widehat{R}$ is reduced and $(S_2)$), and $e_1^*(Q) = 0$ for some (and hence for all) parameter ideal $Q\subseteq R$, then $R$ is $F$-rational.
\end{conjecture}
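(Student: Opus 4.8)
The plan is to extract from the single hypothesis $e_1^*(Q)=0$ the two ingredients that together characterise $F$-rationality for an excellent equidimensional ring: that $R$ is Cohen--Macaulay, and that some (equivalently every) parameter ideal is tightly closed. Since tight closure, the Buchsbaum property and $F$-rationality are all compatible with completion, and $\widehat R$ is reduced and $(S_2)$ by hypothesis, I first pass to $\widehat R$ and assume $R$ is complete, reduced and equidimensional; I also fix a standard system of parameters generating $Q$, which is harmless because $e_1^*(Q)=0$ for one parameter ideal forces it for all. The $(S_2)$ condition gives $H^0_\m(R)=H^1_\m(R)=0$, so the only obstructions to Cohen--Macaulayness are the finite-length, $\m$-annihilated modules $H^i_\m(R)$ with $2\le i\le d-1$. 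A striking feature, visible from the St\"uckrad--Vogel formula $e_i(Q)=(-1)^i\sum_{j=0}^{i-1}\binom{i-1}{j}\ell\big(H^j_\m(R)\big)$, is that the ordinary first coefficient already vanishes, $e_1(Q)=-\ell\big(H^0_\m(R)\big)=0$; thus the ordinary $Q$-adic filtration is blind to the higher cohomology in its first coefficient, and every bit of the conclusion must be read off from the \emph{tight} filtration $\{(Q^{n+1})^*\}$.

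The Cohen--Macaulay case is clean and should be isolated first. There the tight filtration is good and one has the Huneke--Ooishi type expansion
\[
e_1^*(Q)=\sum_{n\ge 0}\ell\big((Q^{n+1})^*/Q\,(Q^{n})^*\big),
\]
a sum of non-negative integers because $Q\,(Q^{n})^*\subseteq (Q^{n+1})^*$ (this is also the natural source of the Ma--Quy inequality $e_1^*(Q)\ge 0$). Its $n=0$ term is $\ell(Q^*/Q)$, so in particular $e_1^*(Q)\ge\ell(Q^*/Q)$. Hence $e_1^*(Q)=0$ forces $Q^*=Q$, and a Cohen--Macaulay local ring in which one parameter ideal is tightly closed is $F$-rational. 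The whole problem therefore reduces to showing that the hypothesis forces Cohen--Macaulayness.

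For that reduction the key is an exact decomposition of $e_1^*(Q)$, valid for Buchsbaum rings, into a part measuring the Cohen--Macaulay defect and a part measuring the $F$-rational defect. Writing $a_n=\ell\big((Q^{n+1})^*/Q^{n+1}\big)$, the Buchsbaum length formula shows that $e_1^*(Q)$ is exactly the normalised leading coefficient of the polynomial $a_n$. I would bound $a_n$ below using the limit closure $(Q^{n+1})^{\lim}\subseteq (Q^{n+1})^*$, whose Hilbert function is computed from the maps $R/Q^{n+1}\to H^d_\m(R)$ and hence from the lower local cohomology, and account for the remaining, genuinely tight, contribution through $0^*_{H^d_\m(R)}$. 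The target is a formula of the shape
\[
e_1^*(Q)=\sum_{i=2}^{d-1}c_i\,\ell\big(H^i_\m(R)\big)+\ell\big(0^*_{H^d_\m(R)}\big),\qquad c_i>0,
\]
using crucially that each $H^i_\m(R)$ is annihilated by $\m$. Granting it, $e_1^*(Q)=0$ forces $H^i_\m(R)=0$ for all $i<d$, so $R$ is Cohen--Macaulay, and then the previous paragraph finishes the proof.

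The main obstacle is precisely this decomposition: controlling the tight closures of \emph{all} powers $Q^{n+1}$, not merely of $Q$. The difficulty is structural rather than computational, because the associated graded ring of a Buchsbaum ring is not Cohen--Macaulay, so the superficial-element reductions behind the expansion of the second paragraph only survive up to correction terms, and one must prove that these corrections reorganise into the stated positive combination of the $\ell\big(H^i_\m(R)\big)$ without cancelling against the tight term. The subtlest point is the detection of non-Cohen--Macaulayness by the \emph{first} coefficient: the lower cohomology perturbs the ordinary Hilbert polynomial only in degree $d-3$ and below, yet one must show that passing to tight closure promotes any non-vanishing $H^i_\m(R)$ $(i<d)$ to a strictly positive degree $(d-1)$ term in $a_n$. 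Establishing this promotion, together with verifying that $0^*_{H^d_\m(R)}$ has finite length equal to the residual leading coefficient, is where essentially all the work lies.
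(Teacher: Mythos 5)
The decisive step of your argument --- the reduction from Buchsbaum to Cohen--Macaulay --- is not proved: it is deferred to a ``target formula'' $e_1^*(Q)=\sum_{i=2}^{d-1}c_i\,\ell(H^i_\m(R))+\ell(0^*_{H^d_\m(R)})$ that you yourself acknowledge is ``where essentially all the work lies.'' That exact decomposition is precisely the Dubey--Quy--Verma theorem, and it is known only for \emph{tight} Buchsbaum rings, a strictly smaller class than Buchsbaum rings (the paper stresses that even the Cohen--Macaulay ring $\mathbb{F}_p[[X,Y,Z]]/(X^4+Y^4+Z^4)$ fails to be tight Buchsbaum); proving it for a general Buchsbaum ring would require exactly the control of $(Q^{n+1})^*$ for all $n$ that you flag as the obstacle. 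The point of the paper is that no such decomposition is needed. It replaces the equality by a chain of one-ideal inequalities: first $e_1^*(Q)\ge e_0(Q)-\ell(R/Q^*)+e_1(Q)$ (Lemma \ref{Lem2}), obtained by bounding $\ell(R/(Q^{n+1})^*)\le\ell(R/Q^nQ^*)$ and using the Hochster--Huneke theorem that $Q^n/Q^nQ^*$ is $R/Q^*$-free of rank $\binom{n+d-1}{d-1}$ (Lemma \ref{Lem1}); then $\ell(R/Q^*)\le\ell(R/Q^{\lim})$ since $Q^{\lim}\subseteq Q^*$; and finally the exact Buchsbaum identity (Lemma \ref{Lem3})
\[
e_0(Q)-\ell(R/Q^{\lim})+e_1(Q)=\sum_{i=1}^{d-1}\binom{d-2}{i-2}\ell\bigl(H^i_\m(R)\bigr)\ \ge\ 0,
\]
whose vanishing under $(S_2)$ forces $H^i_\m(R)=0$ for $2\le i\le d-1$, i.e.\ Cohen--Macaulayness; the tight closures of higher powers of $Q$ never enter. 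This sidestep is the idea missing from your proposal.

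Two of your supporting claims are also incorrect. First, $e_1(Q)=-\ell(H^0_\m(R))=0$ rests on a misquoted formula: by Theorem \ref{Hil Buchs}, $e_1(Q)=-\sum_{j=1}^{d-1}\binom{d-2}{j-1}\ell(H^j_\m(R))$, which is nonzero for every non-Cohen--Macaulay Buchsbaum ring satisfying $(S_2)$ (for $R=k[[x,y,u,v]]/((x,y)\cap(u,v))$ one has $e_1(Q)=-\ell(H^1_\m(R))=-1$). So the ordinary first coefficient is not ``blind'' to the intermediate cohomology --- on the contrary, Lemma \ref{Lem3} exploits its exact value --- and had your claim been true, the theorem would be immediate from the known fact that an unmixed local ring with $e_1(Q)=0$ for some parameter ideal is Cohen--Macaulay (Theorem 2.1 of the paper, from \cite{GGH10, MSV}). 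Second, in the Cohen--Macaulay case the expansion $e_1^*(Q)=\sum_{n\ge0}\ell((Q^{n+1})^*/Q(Q^n)^*)$ is a Huckaba--Marley \emph{equality} that holds only when the associated graded ring of the tight filtration has depth at least $d-1$ \cite{HM}, which you have not verified; what is true unconditionally (and suffices) is the Northcott-type inequality $e_1^*(Q)\ge\ell(Q^*/Q)$, proved by the same $Q^nQ^*$ comparison as above --- this is how the Cohen--Macaulay case is established in \cite{GMV}, which the paper simply invokes as Theorem \ref{CM case}.
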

This conjecture is true for Cohen-Macaulay local rings in \cite[Theorem 4.4]{GMV}, and tight Buchsbaum local rings in \cite[Corollary 4.6]{DQV}. This paper proves the conjecture under new assumptions; the main theorem is as follows. 
\begin{theorem}
	Conjecture 1 holds if we further assume that R is reduced and Buchsbaum.
\end{theorem}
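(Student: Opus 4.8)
The plan is to show that the hypotheses force $R$ to be Cohen--Macaulay and then to quote the Cohen--Macaulay case \cite[Theorem 4.4]{GMV}. I may assume $R$ is complete, since $\widehat R$ is reduced, $(S_2)$ and Buchsbaum, and the tight Hilbert coefficients are insensitive to completion. Set $h_i=\ell\big(H^i_{\m}(R)\big)$ for $0\le i\le d-1$; these are finite because $R$ is Buchsbaum, and reducedness forces $h_0=0$. The whole proof reduces to showing that the vanishing of $e_1^*(Q)$ forces $h_1=\cdots=h_{d-1}=0$: once this is known, $\depth R=d$, so $R$ is Cohen--Macaulay, and \cite[Theorem 4.4]{GMV} applies to the now Cohen--Macaulay ring (still with $e_1^*(Q)=0$) to give $F$-rationality.

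The mechanism I propose is the inequality
\[
e_1^*(Q)\ \ge\ \sum_{i=1}^{d-1}\binom{d-2}{i-1}\,h_i,
\]
which I claim holds for every parameter ideal $Q$ of a Buchsbaum local ring. Granting it, the right-hand side is a non-negative integer combination of the $h_i$ with strictly positive coefficients $\binom{d-2}{i-1}$, so the hypothesis $e_1^*(Q)=0$ together with $h_i\ge 0$ forces $h_i=0$ for all $1\le i\le d-1$, which is exactly what is needed.

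I would establish the displayed bound by induction on $d$. The base case $d\le 2$ is treated directly: for $d\le 1$ it is the non-negativity $e_1^*(Q)\ge 0$ of \cite[Theorem 1.2]{MQ24}, and for $d=2$ one proves $e_1^*(Q)\ge\ell\big(H^1_{\m}(R)\big)$ by a two-dimensional analysis of the tight filtration. For $d\ge 3$ I would pick a general element $x\in Q$, which is then superficial for the tight filtration and part of a standard system of parameters, so that $\bar R=R/xR$ is Buchsbaum of dimension $d-1$. Since a Buchsbaum ring has $\m\cdot H^j_{\m}(R)=0$ for every $j<d$, the long exact local cohomology sequence attached to $0\to R\xrightarrow{x}R\to\bar R\to 0$ breaks into short exact sequences, giving $\ell\big(H^i_{\m}(\bar R)\big)=h_i+h_{i+1}$ for $0\le i\le d-2$. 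Feeding the induction hypothesis for $\bar R$ into this identity and collecting coefficients by Pascal's rule $\binom{d-3}{i-1}+\binom{d-3}{i-2}=\binom{d-2}{i-1}$ telescopes exactly into the asserted inequality; the binomial coefficients are dictated entirely by this bookkeeping. The numerical input that makes the step work is the comparison $e_1^*(Q;R)\ge e_1^*(\bar Q;\bar R)$ of first tight Hilbert coefficients across the superficial reduction.

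This comparison is where I expect the real difficulty to lie. Superficial-element theory relates $\ell\big(\bar R/(\bar Q^{\,n+1})^*\big)$ to the first difference of $\ell\big(R/(Q^{n+1})^*\big)$, but only once one controls how the tight closure filtration behaves under the quotient map $R\to\bar R$. Persistence of tight closure supplies the inclusion $\big((Q^{n+1})^*+xR\big)/xR\subseteq(\bar Q^{\,n+1})^*$ for free, and this yields a comparison in the \emph{wrong} direction; the direction I actually need comes from the reverse inclusion $(\bar Q^{\,n+1})^*\subseteq\big((Q^{n+1})^*+xR\big)/xR$, i.e.\ from a colon-capturing/specialization statement asserting that tight closure in $\bar R$ is detected by tight closure in $R$ modulo $x$. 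Proving this, while simultaneously coping with the fact that $\bar R$ need not be reduced even though $R$ is (so that the inductive statement must already be framed for possibly non-reduced Buchsbaum rings), is the crux of the argument; it is precisely here that excellence, used for the existence of test elements, the $(S_2)$ condition, and the standard-parameter structure of a Buchsbaum ring must all be brought to bear in the choice of $x$.
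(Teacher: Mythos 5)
Your high-level plan (force Cohen--Macaulayness out of $e_1^*(Q)=0$, then quote the Cohen--Macaulay case) is also the paper's plan, but the inequality you build it on is false, and the failure is fatal rather than technical. You claim that every Buchsbaum local ring satisfies $e_1^*(Q)\ge\sum_{i=1}^{d-1}\binom{d-2}{i-1}\ell(H^i_{\m}(R))$, with a \emph{strictly positive} coefficient on $\ell(H^1_{\m}(R))$; since you use $(S_2)$ nowhere else (reducedness alone gives $h_0=0$), this would prove the conjecture for Buchsbaum rings with no $(S_2)$ hypothesis at all. That cannot be: let $R=\mathbb{F}_p[[x,y,u,v]]/((x,y)\cap(u,v))$, the standard reduced, excellent, Buchsbaum, non-Cohen--Macaulay ring of dimension $2$, and $Q=(x+u,y+v)$. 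Its normalization $T=\mathbb{F}_p[[u,v]]\times\mathbb{F}_p[[x,y]]$ is regular, module-finite over $R$, with $\m_T\subseteq R$ and $T/R\cong\mathbb{F}_p$. Persistence and regularity of $T$ give $(Q^{n+1})^*\subseteq Q^{n+1}T\cap R$, while the conductor argument (any $c\in\m_T\cap R^{\circ}$ satisfies $cT\subseteq R$) gives the reverse inclusion, so $(Q^{n+1})^*=Q^{n+1}T\cap R$. Since $QT=\m_T$ and $Q^{n+1}T\subseteq\m_T\subseteq R$, the sequence $0\to R/(Q^{n+1})^*\to T/Q^{n+1}T\to T/R\to 0$ is exact, hence $\ell(R/(Q^{n+1})^*)=2\binom{n+2}{2}-1$ for all $n\ge 0$. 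Thus $e_1^*(Q)=0$ although $\ell(H^1_{\m}(R))=1$ and $R$ is not $F$-rational. So your base case $d=2$ (where you assert $e_1^*(Q)\ge\ell(H^1_{\m}(R))$ ``by a two-dimensional analysis'') is already false, and with it the whole induction. The same point is visible in the paper's preliminaries: by the result of \cite{DQV} quoted there, tight Buchsbaum rings satisfy $e_1^*(Q)=\sum_{i=2}^{d-1}\binom{d-2}{i-2}\ell(H^i_{\m}(R))+\ell(0^*_{H^d_{\m}(R)})$; the correct binomial coefficient is $\binom{d-2}{i-2}$, whose $i=1$ term is zero. No lower bound on $e_1^*(Q)$ can detect $H^1_{\m}(R)$, which is precisely why $(S_2)$ is a hypothesis of the conjecture and must be used as such, not derived numerically.

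For comparison, the paper's proof needs no superficial elements and no induction (your inductive comparison $e_1^*(Q;R)\ge e_1^*(\bar Q;\bar R)$, which you yourself concede is unproven, would be a second gap: tight closure does not specialize well modulo a parameter, and $\bar R$ is typically non-reduced). Instead, Lemma \ref{Lem2} gives $e_1^*(Q)\ge e_0(Q)-\ell(R/Q^*)+e_1(Q)$ directly from the freeness of $Q^n/Q^nQ^*$ over $R/Q^*$ (Lemma \ref{Lem1}); the inclusion $Q^{\lim}\subseteq Q^*$ upgrades this to $e_1^*(Q)\ge e_0(Q)-\ell(R/Q^{\lim})+e_1(Q)$; and the closed Buchsbaum formulas for $e_1(Q)$ (Theorem \ref{Hil Buchs}) and for $\ell(Q^{\lim}/Q)$ evaluate that bound as $\sum_{i=2}^{d-1}\binom{d-2}{i-2}\ell(H^i_{\m}(R))$ (Lemma \ref{Lem3}). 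Vanishing of $e_1^*(Q)$ therefore kills only $H^2_{\m}(R),\dots,H^{d-1}_{\m}(R)$; it is the $(S_2)$ hypothesis that kills $H^0_{\m}(R)$ and $H^1_{\m}(R)$, giving Cohen--Macaulayness, after which Theorem \ref{CM case} concludes $F$-rationality. To salvage your write-up, replace your inequality by this one (note the index shift in the binomial coefficient) and move $(S_2)$ from its vague supporting role into the one step the numerics cannot reach.
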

We also prove that the condition $(S_2)$ of the ring cannot be omitted in Proposition \ref{Pro1}.

\subsection*{Acknowledgement} The authors are grateful to the referee for carefully reading the paper. This article was written during many visits of the third author to Vietnam
Institute for Advanced Studies in Mathematics. He thanks sincerely the institute
for their hospitality and valuable supports. The third author also thanks to Basque Center for Applied Mathematics for their partial support.  He is grateful to Kriti Goel, Linquan Ma and Ilya Smirnov for their comments on the paper.

\section{Preliminaries}
\subsection{Hilbert coefficients}
Let $(R, \frak m)$ be a Noetherian local ring of dimension $d$ and let $I\subseteq R$ be an $\m$-primary ideal. Then for all $n \gg 0$ we have
$$\ell(R/I^{n+1}) = e_0(I) \binom{n+d}{d} - e_1(I) \binom{n+d-1}{d-1} + \cdots + (-1)^d e_d(I),$$
where $e_0(I), \cdots, e_d(I)$ are all integers, and are called the Hilbert coefficients of $I$.

Now we suppose $\widehat{R}$ is reduced. This condition guarantees that $R \oplus \overline{I}t \oplus \overline{I^2}t^2 \oplus \cdots $ is module-finite over $R[It]$. As a consequence of this fact, one can show that for all $n\gg0$, $\ell(R/\overline{I^{n+1}})$ agrees with a polynomial in $n$ and one can write

$$\ell(R/\overline{I^{n+1}}) = \overline{e}_0(I) \binom{n+d}{d} - \overline{e}_1(I) \binom{n+d-1}{d-1} + \cdots + (-1)^d \overline{e}_d(I),$$
where the integers $\overline{e}_0(Q), \cdots, \overline{e}_d(Q)$ are called the normal Hilbert coefficients. It is well-known that $e_0(I)=\overline{e}_0(I)$ agrees with the Hilbert-Samuel multiplicity $e(I, R)$ of $I$. %There has been tremendous work on the study of (normal) Hilbert coefficients for parameter ideals \cite{V,GoMV,MMV}.

Now in characteristic $p >0$, the tight closure of an ideal $I \subseteq R$, introduced by Hochster-Huneke, is defined as follows:
$$I^* :=\{x \in R \mid \text{there exists } c \in R \setminus \cup_{\frak p \in \text{min}(R)} \frak p \text{ such that } cx^{p^e} \in I^{[p^e]} \text{ for all } e\gg 0\}.$$
A local ring $(R, \frak m)$ of prime characteristic $p > 0$ is called $F$-rational if
every ideal generated by a system of parameters is tightly closed, i.e. $Q^* = Q$ for every parameter ideal $Q$. In general, tight closure is always contained in the integral closure, i.e. $I^* \subseteq \bar{I}$. We know that $R\oplus I^*t \oplus (I^2)^*t^2 \oplus \cdots$ is an $R[It]$-algebra that is also module-finite over $R[It]$: the fact that it is an $R[It]$-algebra follows from the fact that $(I^a)^*(I^b)^*\subseteq (I^{a+b})^*$ for all $a, b$, and that it is module-finite over $R[It]$, it is an $R[It]$-subalgebra of $R \oplus \overline{I}t \oplus \overline{I^2}t^2 \oplus \cdots $, and the latter is module-finite over $R[It]$ (note that $R[It]$ is Noetherian). Based on the discussion above, one can show that for all $n\gg0$, $\ell(R/(I^{n+1})^*)$ also agrees with a polynomial in $n$, and we write
$$\ell(R/(I^{n+1})^*) = e_0^*(I) \binom{n+d}{d} - e_1^*(I) \binom{n+d-1}{d-1} + \cdots + (-1)^d e_d^*(I),$$
for all $n\gg 0$ (see \cite{HM} for more general results). We call the integers $e_0^*(I),\dots, e_d^*(I)$ the tight Hilbert coefficients with respect to $I$. It is easy to see that $e_0^*(I)=e(I, R)$ is still the Hilbert-Samuel multiplicity of $I$, and that we always have $\overline{e}_1(I)\geq e_1^*(I)\geq e_1(I)$ by comparing the coefficients of $n^{d-1}$ and note that $I^n\subseteq(I^n)^*\subseteq \overline{I^n}$ for all $n$. The main result of this paper is inspired by the following results.
\begin{theorem}[\cite{V, MSV, GGH10}] Let $Q$ be a parameter ideal of $R$. Then $e_1(Q) \le 0$. Moreover, assume $R$ is unmixed, $R$ is Cohen-Macaulay if and only if $e_1(Q) = 0$ for some $Q$.
\end{theorem}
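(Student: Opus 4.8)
The plan is to prove both assertions by induction on $d = \dim R$, reducing the general case to dimension one by repeatedly killing the finite-length submodule $H^0_\m(R)$ and then cutting by a superficial nonzerodivisor. Throughout I may assume the residue field $R/\m$ is infinite, after the faithfully flat base change $R \rightsquigarrow R[t]_{\m R[t]}$, which preserves $d$, the Hilbert coefficients of $Q$, the unmixedness of $\widehat R$, and the Cohen--Macaulay property; this guarantees the existence of superficial elements. The base case $d = 1$ is a direct computation: writing $W = H^0_\m(R)$ and $\overline R = R/W$, the element $x$ generating $Q$ is a nonzerodivisor on $\overline R$, so $\overline R$ is Cohen--Macaulay and $\ell(\overline R/x^{n+1}\overline R) = (n+1)\,e(x;R)$, while the snake lemma applied to multiplication by $x^{n+1}$ on $0 \to W \to R \to \overline R \to 0$ gives $\ell(R/x^{n+1}R) = (n+1)\,e(x;R) + \ell(W)$ for $n \gg 0$. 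Comparing with the Hilbert polynomial yields $e_1(Q) = -\ell(H^0_\m(R)) \le 0$, with equality if and only if $\depth R \ge 1$, i.e. $R$ is Cohen--Macaulay.

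For the inductive step ($d \ge 2$) I would isolate two length computations. First, since $W = H^0_\m(R)$ has finite length, $W \cap Q^{n+1} = 0$ for $n \gg 0$ by Artin--Rees, so $\ell(R/Q^{n+1}) = \ell(\overline R/Q^{n+1}\overline R) + \ell(W)$ for $n \gg 0$; as the extra constant $\ell(W)$ only alters the degree-zero coefficient, it affects only $e_d$ and leaves $e_i(Q;R) = e_i(Q;\overline R)$ for $0 \le i \le d-1$, in particular $e_1$. Thus I may assume $\depth R > 0$. Second, choosing $x$ superficial for $Q$ that is moreover a nonzerodivisor (possible by prime avoidance once $\m \notin \Ass R$), one has $(Q^{n+1}:x) = Q^n$ for $n \gg 0$, whence $\ell(R/Q^{n+1}) - \ell(R/Q^n) = \ell(R'/Q'^{n+1})$ with $R' = R/xR$, $Q' = QR'$ of dimension $d-1$. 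Comparing first differences of Hilbert polynomials gives $e_i(Q;R) = e_i(Q';R')$ for $0 \le i \le d-1$, so in particular $e_1(Q;R) = e_1(Q';R')$. Iterating these two reductions lands in dimension one, where $e_1 \le 0$ was established; hence $e_1(Q) \le 0$ in all dimensions.

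The equivalence for unmixed $R$ splits into two directions. The implication ``Cohen--Macaulay $\Rightarrow e_1(Q) = 0$'' is immediate: for a parameter ideal in a Cohen--Macaulay ring the associated graded ring is a polynomial ring over $R/Q$, so $\ell(R/Q^{n+1}) = \ell(R/Q)\binom{n+d}{d}$ and every higher coefficient, in particular $e_1(Q)$, vanishes. For the converse I would again induct on $d$. An unmixed ring of dimension $d \ge 1$ has $\depth R \ge 1$, so $H^0_\m(R) = 0$ and, choosing a superficial nonzerodivisor $x$, the reduction above gives $e_1(Q;R) = e_1(Q';R')$ with $R' = R/xR$. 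If $e_1(Q) = 0$ then $e_1(Q') = 0$; if one can apply the inductive hypothesis to $R'$ one concludes $R'$ is Cohen--Macaulay, and since $x$ is a nonzerodivisor this forces $\depth R = \depth R' + 1 = d$, i.e. $R$ Cohen--Macaulay. The case $d = 2$ is unconditional: there $R'$ has dimension one and $e_1(Q') = -\ell(H^0_\m(R')) = 0$ forces $\depth R' \ge 1$, hence $\depth R \ge 2$.

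The main obstacle is precisely the inductive step in the converse for $d \ge 3$: unmixedness of $R$ does not pass to the hypersurface section $R' = R/xR$, so the inductive hypothesis need not apply to $R'$ verbatim. Concretely, $H^0_\m(R') \cong (0 :_{H^1_\m(R)} x)$ can be nonzero, and killing it preserves $e_1$ but loses information about $H^1_\m(R)$; a naive induction would only produce a Cohen--Macaulay ring at the bottom of the chain without controlling the intermediate local cohomology. To close this gap I would pass to the completion (so that unmixedness becomes equidimensionality together with the absence of embedded primes, a condition one can track through generic hyperplane sections) and choose the superficial nonzerodivisor $x$ generically enough that $R'$ stays unmixed in the relevant sense; equivalently, one shows directly that $e_1(Q) = 0$ forces $H^i_\m(R) = 0$ for all $i < d$ by analyzing, via the long exact local cohomology sequence of $0 \to R \xrightarrow{x} R \to R' \to 0$, how the vanishing of $e_1$ propagates through the modules $(0 :_{H^i_\m(R)} x)$. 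This cohomological bookkeeping, rather than the Hilbert-coefficient bookkeeping, is where the real work lies, and it is exactly the point at which the unmixed hypothesis becomes indispensable.
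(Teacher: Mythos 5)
The paper does not prove this statement at all: it is quoted as known background, with the attribution split among \cite{V, MSV, GGH10} (Vasconcelos' conjecture, the negativity $e_1(Q)\le 0$ due to Mandal--Singh--Verma, and the characterization of Cohen--Macaulayness due to Ghezzi--Goto--Hong--Ozeki--Phuong--Vasconcelos). So your proposal can only be measured against that literature, and by that measure it is partially successful. Your proof of $e_1(Q)\le 0$ is correct and is essentially the standard argument: killing $W=H^0_{\m}(R)$ changes only $e_d$ (Artin--Rees), a superficial nonzerodivisor $x$ satisfies $(Q^{n+1}:x)=Q^n$ for $n\gg 0$ and hence preserves $e_i$ for $0\le i\le d-1$, and in dimension one $e_1(Q)=-\ell(H^0_{\m}(R))\le 0$. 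The implication Cohen--Macaulay $\Rightarrow e_1(Q)=0$ and your unconditional treatment of the converse in dimension $\le 2$ are also fine.

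The converse for $d\ge 3$, however, is the entire substance of \cite{GGH10}, and it remains unproved in your write-up, as you yourself concede; moreover, neither of your two suggested repairs can work as stated. (a) There is no ``generic enough'' superficial choice making $R'=R/xR$ unmixed: since $H^1_{\m}(R)$ is $\m$-torsion, it has nonzero socle whenever it is nonzero, so $H^0_{\m}(R')\cong(0:_{H^1_{\m}(R)}x)\neq 0$ for \emph{every} parameter $x$ unless $H^1_{\m}(R)=0$ --- and $H^1_{\m}(R)=0$ (i.e.\ $\depth R\ge 2$) is part of what you are trying to prove, so this route is circular. (b) ``Show $e_1(Q)=0$ forces $H^i_{\m}(R)=0$ for all $i<d$'' is not a strategy but a restatement of the conclusion, since those vanishings are equivalent to Cohen--Macaulayness; worse, for a merely unmixed ring the modules $H^i_{\m}(R)$ need not be finitely generated, so the length bookkeeping you envisage along $0\to R\xrightarrow{\,x\,}R\to R'\to 0$ does not even get off the ground. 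The actual mechanism in \cite{GGH10} is different: one completes, cuts by a section, and splits off the unmixed component $U=U_A(0)$ of the section $A$, using that $0\to U\to A\to A/U\to 0$ yields $e_1(QA)=e_1(Q(A/U))-e_0(Q;U)$ when $\dim U=d-1$ and $e_1(QA)=e_1(Q(A/U))$ when $\dim U\le d-2$; since $e_1(Q(A/U))\le 0$ and $e_0(Q;U)>0$, the vanishing of $e_1$ forces the embedded components to be small, and the induction closes on the unmixed quotients rather than on the sections themselves. This sign-constrained splitting is the missing idea, and without it your proposal proves the negativity statement and the easy implication but not the characterization of Cohen--Macaulayness.
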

\begin{theorem}[\cite{GHM, MQ24}] Let $(R, \frak m)$ be a local ring such that $\widehat{R}$ is reduced and equidimensional. Then for all parameter ideals $Q$ we have $\overline{e}_1(Q) \ge 0$. Moreover, if $\overline{e}_1(Q) = 0$ for some parameter ideal $Q$, then $R$ is regular and $\nu(\frak m/Q) \le 1$ provided $\widehat{R}$ is $(S_2)$.  
\end{theorem}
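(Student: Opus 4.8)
The plan is to treat the two assertions separately, in each case first reducing to a convenient normal form. Passing from $R$ to $R[X]_{\m R[X]}$ leaves all the $\overline{e}_i(Q)$, the standing hypotheses ($\widehat{R}$ reduced, equidimensional, $(S_2)$) and the two conclusions (regularity, $\nu(\m/Q)\le 1$) unchanged, so I may assume the residue field $k=R/\m$ is infinite; and since $\widehat{R}$ is reduced (i.e.\ $R$ is analytically unramified) one has $\overline{I}\widehat{R}=\overline{I\widehat{R}}$ for $\m$-primary $I$, so completion preserves the $\overline{e}_i$ and I may assume $(R,\m)$ is complete, reduced, equidimensional (hence unmixed), with $k$ infinite. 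The model case is $\dim R=1$: here $R$ is automatically Cohen--Macaulay, $Q=(x)$ with $x$ a nonzerodivisor, and since each $\overline{Q^n}$ has rank one, $\ell(\overline{Q^n}/x\overline{Q^n})=e(x;\overline{Q^n})=\overline{e}_0(Q)$ for every $n$. A telescoping computation then gives
\[
\overline{e}_1(Q)=\sum_{n\ge 0}\ell\!\left(\overline{Q^{n+1}}/x\,\overline{Q^n}\right),
\]
a sum of lengths, so $\overline{e}_1(Q)\ge 0$; and isolating the $n=0$ summand yields the sharper bound $\overline{e}_1(Q)\ge \ell(\overline{Q}/Q)$, which will drive the vanishing analysis.

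For nonnegativity when $d\ge 2$ I would induct via a superficial element: choose $x\in Q$ a nonzerodivisor that is superficial for the normal filtration $\{\overline{Q^n}\}$, set $\bar R=R/xR$, and use that superficiality gives $\overline{e}_1(Q;R)=e_1(\bar{\mathcal F};\bar R)$ for the image filtration $\bar{\mathcal F}=\{(\overline{Q^n}+xR)/xR\}$. The main obstacle in this half is that $\bar{\mathcal F}$ is \emph{not} the normal filtration of $\bar R$: because integral closure does not commute with reduction modulo $x$ one only has $(\overline{Q^n}+xR)/xR\subseteq\overline{(\bar Q)^{\,n}}$, and moreover $\bar R$ need not stay unmixed, so the induction hypothesis does not apply verbatim. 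I would control this by passing to the module-finite normalization $\overline{R}$, transporting the length count to the better-behaved (normal, hence $(R_1)$ and $(S_2)$) ring $\overline{R}$ where Itoh-type positivity is available, and tracking the finite-length defect $\ell(\overline{R}/R)$ together with the lower local-cohomology corrections, which enter with the correct sign precisely because $R$ is unmixed.

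For the vanishing statement, suppose $\overline{e}_1(Q)=0$ with $\widehat{R}$ moreover $(S_2)$. The crux is to promote the dimension-one bound to $\overline{e}_1(Q)\ge \ell(\overline{Q}/Q)$ in all dimensions, and this is exactly where $(S_2)$ is used: it forces $\depth R\ge 2$, hence $H^0_\m(R)=H^1_\m(R)=0$, which is what makes the superficial reduction of the preceding paragraph clean enough in low cohomological degree that the $n=0$ contribution $\ell(\overline{Q}/Q)$ survives as a genuine lower bound for $\overline{e}_1(Q)$. Granting this, $\overline{e}_1(Q)=0$ forces $\overline{Q}=Q$, i.e.\ the parameter ideal $Q$ is integrally closed. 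Since $k$ is infinite, an integrally closed $\m$-primary ideal is $\m$-full (Goto), and an $\m$-full parameter ideal satisfies $\nu(\m)\le \nu(Q)=d$; as $\nu(\m)\ge d$ always, we conclude $\nu(\m)=d$, so $R$ is regular (and in particular Cohen--Macaulay, whence Itoh gives $\overline{Q^n}=Q^n$ for all $n$).

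It remains to deduce $\nu(\m/Q)\le 1$. With $R$ now regular, $\gr_\m R$ is a polynomial ring and the bound $\overline{e}_1(Q)\ge\ell(\overline{Q}/Q)$ is in force. If $\nu(\m/Q)\ge 2$, then after a change of regular system of parameters the leading forms of the generators of $Q$ span a subspace of codimension at least two in $\m/\m^2$, and a direct monomial computation shows that such a parameter ideal has $\overline{Q}\supsetneq Q$, so $\overline{e}_1(Q)\ge\ell(\overline{Q}/Q)>0$, a contradiction; hence $\nu(\m/Q)\le 1$, finishing the proof. The single step I expect to be genuinely hard is establishing $\overline{e}_1(Q)\ge\ell(\overline{Q}/Q)$ under only the $(S_2)$ hypothesis: because $(S_2)$ is not inherited by hyperplane sections, the naive induction on $d$ collapses, and one must instead control the normal filtration modulo a superficial element cohomologically (or via the normalization), which is the technical heart of \cite{GHM, MQ24}.
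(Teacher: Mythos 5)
A preliminary remark: the paper itself contains no proof of this statement --- it is quoted as background, with the nonnegativity half due to \cite{GHM} and the vanishing half due to \cite{MQ24} --- so your proposal must be measured against those papers. Your dimension-one computation is correct (it is Itoh's telescoping argument), and the reductions to a complete, reduced, equidimensional ring with infinite residue field are sound. But there is a concrete error at the inductive heart of the nonnegativity half: you propose to pass to the normalization $\overline{R}$ and ``track the finite-length defect $\ell(\overline{R}/R)$''. The module $\overline{R}/R$ does not have finite length in general; its dimension can be as large as $d-1$, and a defect of dimension $d-1$ contributes exactly to the coefficient of $\binom{n+d-1}{d-1}$, i.e.\ it contaminates the very coefficient $\overline{e}_1$ you are trying to bound. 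The device that makes the argument work in \cite{GHM} (and which the present paper itself exploits in Proposition \ref{Pro1}) is the $S_2$-ification $S$, not the normalization: unmixedness of $\widehat{R}$ forces $\dim(S/R)\le d-2$, so the passage from $R$ to $S$ leaves $\overline{e}_1$ unchanged and one may assume $(S_2)$, whence $\depth R\ge 2$ and superficial elements behave. With the full normalization this dimension bound fails, and your claim that the corrections ``enter with the correct sign precisely because $R$ is unmixed'' has no support.

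The vanishing half rests entirely on the inequality $\overline{e}_1(Q)\ge\ell(\overline{Q}/Q)$ in dimension $\ge 2$, which you assert but never prove; ``$\depth R\ge 2$ makes the $n=0$ contribution survive'' is not an argument, because after reducing modulo a superficial element the image filtration is no longer the normal filtration of the quotient, so the dimension-one identity you would like to telescope is simply unavailable. (In \cite{MQ24} such a bound is extracted from an Itoh/Hochster--Huneke freeness statement --- compare \cite[Theorem 8.20]{HH94} and the mechanism of Lemmas \ref{Lem1} and \ref{Lem2} of the present paper, which prove the tight-closure analogue $e_1^*(Q)\ge e_0(Q)-\ell(R/Q^*)+e_1(Q)$.) Your endgame is also partly circular and partly broken: once $\overline{Q}=Q$ is known, the conclusion ``$R$ regular and $\nu(\m/Q)\le 1$'' is precisely Goto's theorem on integrally closed parameter ideals. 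Your $\m$-full/Rees-property derivation of regularity is indeed Goto's argument, but the ``direct monomial computation'' for $\nu(\m/Q)\le 1$ does not work as written: the generators of $Q$ need not be monomialized by any change of regular system of parameters, and you cannot reduce to the two-dimensional quotient $R/(x_1,\dots,x_{d-2})$ because integral closedness is not preserved under passing to quotients; Goto's actual proof is an induction built on the $\m$-fullness identity $\m Q:x=Q$. In sum, the proposal correctly reconstructs the skeleton of the known proofs, but the two load-bearing steps --- the $S_2$-correction in the induction and the bound $\overline{e}_1(Q)\ge\ell(\overline{Q}/Q)$ --- are respectively wrong and missing.
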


\begin{theorem}[\cite{MQ24, GMV}] \label{CM case} Let $(R, \frak m)$ be a local ring of prime characteristic $p$ such that $\widehat{R}$ is reduced and equidimensional. Then for all parameter ideals $Q$ we have $e^*_1(Q) \ge 0$. Assume $R$ is Cohen-Macaulay, then $e^*_1(Q) = 0$ for some parameter ideal $Q$ if and only if $R$ is $F$-rational.
\end{theorem}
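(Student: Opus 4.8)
The plan is to reduce the assertion to the Cohen--Macaulay case already settled in Theorem \ref{CM case}, by showing that $e_1^*(Q)=0$ forces $R$ to be Cohen--Macaulay; since $F$-rational rings are Cohen--Macaulay this is also necessary, so the real content is the implication ``$e_1^*(Q)=0\Rightarrow R$ Cohen--Macaulay''. We may first pass to $\widehat{R}$: this preserves reducedness, the Buchsbaum property and $(S_2)$, leaves every $e_i^*(Q)$ unchanged (tight closure commutes with completion for excellent rings), and detects Cohen--Macaulayness as well as, by excellence, $F$-rationality. If $d=\dim R\le 1$ then $R$ is reduced of dimension $\le 1$, hence Cohen--Macaulay, and if $d=2$ then $(S_2)$ already gives $\depth R\ge 2$, so again $R$ is Cohen--Macaulay; in all these cases Theorem \ref{CM case} applies directly, so we may assume $d\ge 3$. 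Because $R$ is Buchsbaum, each $H^i_\m(R)$ with $i<d$ has finite length and is annihilated by $\m$; because $R$ satisfies $(S_2)$ and, after completion, admits a dualizing complex, we have $H^0_\m(R)=H^1_\m(R)=0$. Hence $R$ is Cohen--Macaulay if and only if $H^i_\m(R)=0$ for all $2\le i\le d-1$.

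Next I would fix a parameter ideal $Q=(x_1,\dots,x_d)$ generated by a standard system of parameters, which exists because $R$ is Buchsbaum. For such $Q$ the classical Buchsbaum formula expresses $\ell(R/Q^{n+1})$ explicitly in terms of $e(Q)$ and the lengths $\ell(H^i_\m(R))$; extracting the coefficient of $\binom{n+d-1}{d-1}$ gives
$$e_1(Q)=-\sum_{i=1}^{d-1} b_i\,\ell\big(H^i_\m(R)\big)=-\sum_{i=2}^{d-1} b_i\,\ell\big(H^i_\m(R)\big)$$
with positive integers $b_i$, the second equality using $H^1_\m(R)=0$. I then compare the ordinary and tight filtrations through
$$\ell\big((Q^{n+1})^*/Q^{n+1}\big)=\ell(R/Q^{n+1})-\ell\big(R/(Q^{n+1})^*\big),$$
which for $n\gg0$ is a polynomial of degree $\le d-1$ with leading coefficient $\big(e_1^*(Q)-e_1(Q)\big)/(d-1)!$, the top-degree terms cancelling since $e_0^*(Q)=e_0(Q)$.

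The crux is a positive lower bound
$$e_1^*(Q)\ \ge\ \sum_{i=2}^{d-1} c_i\,\ell\big(H^i_\m(R)\big),\qquad c_i>0,$$
valid for every Buchsbaum ring. To obtain it I would use the colon-capturing property of tight closure in the excellent reduced ring $R$: for the standard system of parameters the relevant colon ideals attached to $Q^{n+1}$ lie in $(Q^{n+1})^*$, and, since the lower local cohomologies are finite length and $\m$-torsion, one can count these captured elements to bound $\ell\big((Q^{n+1})^*/Q^{n+1}\big)$ from below by a polynomial of degree $d-1$ whose leading coefficient dominates $\frac{1}{(d-1)!}\sum_{i=2}^{d-1}(c_i+b_i)\,\ell(H^i_\m(R))$. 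Combined with the formula for $e_1(Q)$ this yields the displayed bound. Here reducedness is what makes the depth-one defect disappear: the $H^1_\m(R)$-contribution to $e_1(Q)$ is entirely absorbed into the tight closure and cancels, so it does not appear on the right-hand side, while the defects in cohomological degrees $\ge 2$ leave a strictly positive residue. Establishing this bound uniformly in $n$ --- that is, controlling the interaction of the tight closure filtration with the non-Cohen--Macaulay local cohomology --- is the main obstacle.

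Granting the bound, $e_1^*(Q)=0$ forces $\ell(H^i_\m(R))=0$, hence $H^i_\m(R)=0$, for all $2\le i\le d-1$; together with $H^0_\m(R)=H^1_\m(R)=0$ this shows $R$ is Cohen--Macaulay, and Theorem \ref{CM case} then gives that $R$ is $F$-rational, completing the proof. This also clarifies the role of $(S_2)$: it is used precisely to remove $H^1_\m(R)$, the one defect that the bound does not detect, which is why it cannot be dropped --- a Buchsbaum ring with $H^i_\m(R)=0$ for $i\ge 2$ but $H^1_\m(R)\ne 0$ can satisfy $e_1^*(Q)=0$ without being Cohen--Macaulay, as exhibited in Proposition \ref{Pro1}.
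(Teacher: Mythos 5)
You have proved the wrong statement, and circularly at that. The statement in question is Theorem \ref{CM case} itself --- the background result cited from \cite{MQ24, GMV} --- yet your very first sentence reduces ``the assertion'' to ``the Cohen--Macaulay case already settled in Theorem \ref{CM case}'' and your last step invokes that theorem again. As a proof of Theorem \ref{CM case} this is circular; what you have actually sketched is the paper's Main Theorem (Theorem \ref{MainTheorem}), as is clear from the hypotheses you import (reduced, Buchsbaum, $(S_2)$, excellent), none of which appear in Theorem \ref{CM case}. You also silently drop the first assertion of the statement, namely that $e_1^*(Q)\ge 0$ for \emph{all} parameter ideals with no Cohen--Macaulay or Buchsbaum hypothesis; a reduction to the Cohen--Macaulay case cannot deliver that, and indeed the paper's Proposition \ref{Pro1} and the examples following it show that the auxiliary quantity $e_0(Q)-\ell(R/Q^{\lim})+e_1(Q)$ your strategy relies on can be strictly negative outside the $(S_2)$ Buchsbaum setting. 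The actual proofs are in the cited sources: nonnegativity of $e_1^*(Q)$ is \cite[Theorem 1.2]{MQ24}, and the Cohen--Macaulay characterization is \cite[Theorem 4.4]{GMV}, where one uses that in an $F$-rational ring powers of parameter ideals are tightly closed (giving $e_1^*(Q)=e_1(Q)=0$), and conversely that $e_1^*(Q)=0$ forces $Q^*=Q$ via the comparison $\ell(R/(Q^{n+1})^*)\le \ell(R/Q^nQ^*)$ together with the freeness of $Q^n/Q^nQ^*$ over $R/Q^*$.

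Even judged as an attempt at Theorem \ref{MainTheorem}, your argument has a genuine hole where the paper has its two lemmas. Your ``crux'' inequality $e_1^*(Q)\ge\sum_{i=2}^{d-1}c_i\,\ell(H^i_\m(R))$ is asserted, not proved: you gesture at colon capturing and then concede that establishing the bound uniformly in $n$ ``is the main obstacle.'' That obstacle is precisely what the paper resolves, in two separated steps rather than one direct cohomological count: Lemma \ref{Lem2} gives $e_1^*(Q)\ge e_0(Q)-\ell(R/Q^*)+e_1(Q)$, whose engine is Lemma \ref{Lem1} (the rank-$\binom{n+d-1}{d-1}$ freeness of $Q^n/Q^nQ^*$ over $R/Q^*$, from \cite{HH94} or \cite[Lemma 3.1]{DQV}) --- this is where colon capturing actually enters, packaged once and for all instead of as an $n$-by-$n$ count; then the inclusion $Q^{\lim}\subseteq Q^*$ and Lemma \ref{Lem3} compute, via the Buchsbaum formulas (Theorem \ref{Hil Buchs} and the limit-closure length formula), the exact identity $e_0(Q)-\ell(R/Q^{\lim})+e_1(Q)=\sum_{i=1}^{d-1}\binom{d-2}{i-2}\ell(H^i_\m(R))$. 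Note that in this identity the $H^1_\m$-term vanishes automatically because $\binom{d-2}{-1}=0$, so $(S_2)$ is not used to ``remove the $H^1$ contribution from the bound'' as you describe; it is used only at the end, to upgrade the vanishing of $H^i_\m(R)$ for $2\le i\le d-1$ to full Cohen--Macaulayness by supplying $H^0_\m(R)=H^1_\m(R)=0$. Your reduction to $d\ge 3$ and your use of the Buchsbaum structure are sound as far as they go, but without a proof of the lower bound the argument does not close, and without removing the appeal to Theorem \ref{CM case} it cannot serve as a proof of the stated result at all.
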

\subsection{Buchsbaum and tight Buchsbaum rings}
We recall the definition of Buchsbaum rings.
\begin{definition}
A local ring $(R, \frak m)$ is called {\it Buchsbaum} if $\ell(R/Q)- e_0(Q)$ is independent of the choice of parameter ideal $Q$.
\end{definition}
\begin{remark}
Let $(R, \frak m)$ be a Buchsbaum local ring of dimension $d$ and $Q$ any parameter ideal (for example, see \cite{T}). We have
$$\ell(R/Q) -  e_0(Q) = \sum_{i=0}^{d-1} \binom{d-1}{i} \ell(H^i_{\frak m}(R)).$$
\end{remark}
The Hilbert function of parameter ideals in Buchsbaum rings can be understood in terms of local cohomology, see \cite[Corollary 3.2]{S}, \cite[Corollary 4.2]{T}. 
\begin{theorem}\label{Hil Buchs} Let $R$ be Buchsbaum and $Q$ any parameter ideal of $R$. Then for $n\geq 0,$ 
$$\ell(R/Q^{n+1})=\sum_{i=0}^d(-1)^ie_i(Q)\binom{n+d-i}{d-i}, \text{ where }$$
$$e_i(Q)=(-1)^i\sum_{j=0}^{d-i}\binom{d-i-1}{j-1} \ell(H^j_{\fm}(R) )\mbox{ for all } i=1,2,\ldots, d.$$
\end{theorem}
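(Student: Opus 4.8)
The plan is to induct on $d=\dim R$, reducing modulo a single parameter. The base case $d=0$ is immediate: then $Q=(0)$, the ring is Artinian, and both sides equal $\ell(R)=\ell(H^0_{\m}(R))$. For the inductive step I fix a parameter $x$ belonging to a system of parameters that generates $Q$ and pass to $\bar R=R/xR$, which is again Buchsbaum of dimension $d-1$; the inductive hypothesis applies to $\bar R$ with the parameter ideal $\bar Q=Q\bar R$, and $e_0(\bar Q)=e_0(Q)$ because the finite-length module $H^0_{\m}(R)$ does not affect the top-dimensional multiplicity.

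Two structural inputs drive the reduction. First, the exact sequence $0\to (Q^{n+1}:_R x)/Q^n\to R/Q^n\xrightarrow{\ x\ } R/Q^{n+1}\to \bar R/\bar Q^{n+1}\to 0$ gives the length recursion $\ell(R/Q^{n+1})=\ell(\bar R/\bar Q^{n+1})+\ell(R/Q^n)-\ell\big((Q^{n+1}:_R x)/Q^n\big)$. Here I invoke the theory of standard systems of parameters (every system of parameters in a Buchsbaum ring is standard, cf. \cite{S,T}): since $x$ is filter-regular one has $0:_R x=H^0_{\m}(R)$, and the standard property yields $Q^n\cap H^0_{\m}(R)=0$ together with $\ell\big((Q^{n+1}:_R x)/Q^n\big)=\ell(H^0_{\m}(R))$ for $n\ge 1$ (the term is $0$ for $n=0$). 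Second, the Buchsbaum vanishing $\m H^i_{\m}(R)=0$ for $i<d$ forces multiplication by $x$ to annihilate $H^i_{\m}(R)$ in that range; feeding this into the long exact local cohomology sequence of $0\to R/H^0_{\m}(R)\xrightarrow{\ x\ }R\to \bar R\to 0$ splits it into short exact sequences and produces $\ell(H^i_{\m}(\bar R))=\ell(H^i_{\m}(R))+\ell(H^{i+1}_{\m}(R))$ for $0\le i\le d-2$.

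With these in hand the rest is bookkeeping. I substitute the inductive expression for $\ell(\bar R/\bar Q^{n+1})$, which involves the $\ell(H^j_{\m}(\bar R))$, rewrite each of these via the telescoping relation above, and sum the recursion from the base value. The binomial coefficients then reorganize through the Pascal/hockey-stick identity $\sum_{m=j-1}^{d-2}\binom{m}{j-1}=\binom{d-1}{j}$ into exactly the claimed coefficients $(-1)^i e_i(Q)=\sum_{j=0}^{d-i}\binom{d-i-1}{j-1}\ell(H^j_{\m}(R))$; as a consistency check the case $n=0$ collapses to the Buchsbaum invariant $\sum_{i=0}^{d-1}\binom{d-1}{i}\ell(H^i_{\m}(R))$ of the Remark. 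Equivalently, one can package this step through the generating function $\sum_{n\ge 0}\ell(R/Q^{n+1})t^n=(1-t)^{-(d+1)}\sum_{i=0}^d(-1)^i e_i(Q)(1-t)^i$, under which the recursion becomes a clean identity of rational functions.

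I expect the main obstacle to be the evaluation of the colon length $\ell\big((Q^{n+1}:_R x)/Q^n\big)$ uniformly in $n$. The equalities $0:_R x=H^0_{\m}(R)$ and $Q^n\cap H^0_{\m}(R)=0$, and the resulting value $\ell(H^0_{\m}(R))$, are precisely the statements encoding that $x$ behaves like a nonzerodivisor away from the finite-length part; they rest on the full strength of standard-parameter theory for Buchsbaum rings rather than on the naive vanishing of local cohomology, and this is where the real work lies. Once that term is pinned down, the local cohomology telescoping and the binomial identities are routine.
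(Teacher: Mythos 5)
You should know at the outset that the paper does not prove this theorem at all: it quotes it from \cite[Corollary 3.2]{S} and \cite[Corollary 4.2]{T}, so your attempt can only be compared with those classical arguments, not with anything internal to the paper. Your induction on dimension is in fact the classical route (essentially Trung's), and the bookkeeping is right: using the splitting $\ell(H^j_{\m}(R/xR))=\ell(H^j_{\m}(R))+\ell(H^{j+1}_{\m}(R))$ for $0\le j\le d-2$ together with Pascal's rule, the coefficients of the $(d-1)$-dimensional ring agree with $e_i(Q)$ for $i\le d-2$, while the correction $-n\,\ell(H^0_{\m}(R))$ accounts exactly for the claimed $e_{d-1}(Q)$ and $e_d(Q)$ (one needs the convention $\binom{-1}{-1}=1$ in the formula for $e_d$), and the $n=0$ specialization recovers the Buchsbaum invariant as you say.

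Two points need repair before this is a proof. First, the step carrying all the weight --- that $Q^{n+1}:_Rx=Q^n+(0:_Rx)$ and $Q^n\cap H^0_{\m}(R)=0$ for $n\ge1$, whence $\ell\bigl((Q^{n+1}:_Rx)/Q^n\bigr)=\ell(H^0_{\m}(R))$ --- is asserted with a vague appeal to ``standard-parameter theory.'' The definition of a standard system of parameters does not directly yield it, and given your four-term exact sequence and the induction this lemma is \emph{equivalent} to the theorem, so it cannot simply be waved at: it is exactly where the content lies. It is, however, true and citable: every system of parameters in a Buchsbaum ring is an (unconditioned, strong) d-sequence, and Huneke's theory of d-sequences gives both $(0:_Rx)\cap Q=0$ and $Q^{n+1}\cap xR=xQ^n$; the latter yields the colon formula, since $rx\in Q^{n+1}$ forces $rx=xs$ with $s\in Q^n$, hence $r\in Q^n+(0:_Rx)$. (Also, filter-regularity alone only gives $0:_Rx\subseteq H^0_{\m}(R)$; the reverse inclusion uses $\m H^0_{\m}(R)=0$, i.e.\ Buchsbaumness.) Second, your claim $e_0(\bar Q)=e_0(Q)$ fails when $d=1$: there $e_0(\bar Q)=\ell(R/xR)=e_0(Q)+\ell(H^0_{\m}(R))$, by the formula $e(x;R)=\ell(R/xR)-\ell(0:_Rx)$; it is correct for $d\ge2$ precisely because $\dim(0:_Rx)=0<d-1$. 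So either run the base case at $d=1$ using that formula, or insert the correction at the bottom step of the induction. With these two repairs your argument is complete and coincides in substance with the proofs in the sources the paper cites.
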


Recently, Ma and the third author in \cite{MQ22} considered the analogy of Buchsbaum rings in characteristic $p$ as follows.
\begin{definition}
An unmixed local ring $(R, \frak m)$ of characteristic $p >0$ is called {\it tight Buchsbaum} if $e_0(Q) - \ell(R/Q^*)$ is independent of the choice of parameter ideal $Q$.
\end{definition}
The following result is proved in \cite[Theorem 4.3]{MQ22} (see also \cite{Q17} for partial result). 
\begin{remark}
Let $(R, \frak m)$ be a tight Buchsbaum local ring of dimension $d$ and $Q$ any parameter ideal. We have
$$ \ell(R/Q)- \ell(R/Q^*) = \sum_{i=0}^{d-1} \binom{d}{i} \ell(H^i_{\frak m}(R)) + \ell(0^*_{H^d_{\frak m}(R)}),$$
and 
$$e_0(Q)- \ell(R/Q^*) = \sum_{i=1}^{d-1} \binom{d-1}{i-1} \ell(H^i_{\frak m}(R)) + \ell(0^*_{H^d_{\frak m}(R)}),$$
where $0^*_{H^d_{\frak m}(R)}$ denotes the tight closure of the zero submodule of $H^d_{\frak m}(R)$. 
\end{remark}
The tight Hilbert function of tight Buchsbaum ring is studied by Dubey, Quy and Verma in \cite{DQV}.
\begin{theorem}
Let $(R,\fm)$ be a tight Buchsbaum local ring of dimension $d$, and $Q$ a parameter ideal. Then for all $n \ge 0$\\
$$\ell(R/(Q^{n+1})^*)=\sum_{i=0}^d(-1)^ie^*_i(Q)\binom{n+d-i}{d-i}, \text{ where }$$
{\rm (1)} $e_1^*(Q)=e_0(Q)-\ell(R/Q^*)+e_1(Q) \text{ and } e_j^*(Q)=e_j(Q)+e_{j-1}(Q) \text{ for all } 2\leq j \leq d,$\\
{\rm (2)} $e_1^*(Q)=\sum_{i=2}^{d-1} \binom{d-2}{i-2}\ell(H^i_{\frak m}(R)) + \ell(0^*_{H^d_{\frak m}(R)}),$\\
{\rm (3)} $e_i^*(Q)=(-1)^{i-1}\left[ \sum_{j=0}^{d-i}\binom{d-i-1}{j-2}\ell(H^j_{\fm}(R))+\ell(H^{d-i+1}_{\fm}(R))\right]  \text{ for } i=2, \ldots, d.$
%{\rm (4)} $e_d^*(Q)=(-1)^{d-1}\ell(H^1_{\fm}(R)).$
\end{theorem}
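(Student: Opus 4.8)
The plan is to deduce the whole statement from a single length identity and to treat the three displayed formulas in descending order of difficulty. First I would observe that parts (2) and (3) are \emph{formal consequences} of part (1). Substituting into $e_1^*(Q)=e_0(Q)-\ell(R/Q^*)+e_1(Q)$ the Buchsbaum expression for $e_1(Q)$ from Theorem~\ref{Hil Buchs} together with the value of $e_0(Q)-\ell(R/Q^*)$ supplied by the tight Buchsbaum formula \cite{MQ22} recalled above, and then collapsing the two binomial sums by Pascal's rule $\binom{d-1}{i-1}-\binom{d-2}{i-1}=\binom{d-2}{i-2}$, yields (2); likewise, feeding the formulas for $e_i(Q)$ and $e_{i-1}(Q)$ into $e_i^*(Q)=e_i(Q)+e_{i-1}(Q)$ and applying $\binom{d-i}{j-1}-\binom{d-i-1}{j-1}=\binom{d-i-1}{j-2}$ produces (3). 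These manipulations are routine, so I would dispatch them first and regard the essential content as the Hilbert-function expansion together with the coefficient relations in (1).

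Next I would reduce (1), and the validity of the expansion for \emph{all} $n\ge 0$, to the master identity
\begin{equation}\label{master}
\ell(R/(Q^{n+1})^*)=\ell(R/Q^{n})+\binom{n+d-1}{d-1}\,\ell(R/Q^*)\qquad(n\ge 0).
\end{equation}
Since $R$ is tight Buchsbaum it is unmixed, so for $d\ge 1$ we have $H^0_{\fm}(R)=0$ and hence $e_d(Q)=0$ by Theorem~\ref{Hil Buchs}. Writing the ordinary Buchsbaum Hilbert function in the shifted form $\ell(R/Q^{n})=\sum_i(-1)^ie_i(Q)\binom{n+d-1-i}{d-i}$ and expanding $\binom{n+d-1-i}{d-i}=\binom{n+d-i}{d-i}-\binom{n+d-1-i}{d-i-1}$ by Pascal's rule, a direct generating-function computation turns \eqref{master} into exactly $\ell(R/(Q^{n+1})^*)=\sum_i(-1)^ie_i^*(Q)\binom{n+d-i}{d-i}$ with $e_0^*=e_0$, $e_1^*=e_0-\ell(R/Q^*)+e_1$ and $e_j^*=e_j+e_{j-1}$ for $j\ge 2$; in Hilbert-series form this is the clean statement $H_Q^*(z)=z\,H_Q(z)+\ell(R/Q^*)(1-z)^{-d}$. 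As the computation is reversible, the entire theorem is equivalent to \eqref{master}, and I would check the instance $n=0$ (namely $\ell(R/Q^*)=\ell(R/Q^*)$) by hand as a sanity test.

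It remains to prove \eqref{master}, which is the heart of the matter, and here I would induct on $d$ via a general hyperplane section, the base case $d=1$ being the Cohen--Macaulay statement of Theorem~\ref{CM case}. Choose $x\in Q$ general, belonging to a minimal generating set of $Q$ and superficial, and set $\bar R=R/xR$, $\bar Q=Q/xR$. The plan is to establish the \emph{tight colon relation} $(Q^{n+1})^*:_Rx=(Q^n)^*$ together with the compatibility $\big((Q^{n+1})^*+xR\big)/xR=(\bar Q^{n+1})^*$ in $\bar R$; feeding these into the exact sequence $0\to R/\big((Q^{n+1})^*:x\big)\xrightarrow{x}R/(Q^{n+1})^*\to R/\big((Q^{n+1})^*+xR\big)\to 0$ gives the recursion $f_R(n)=f_R(n-1)+f_{\bar R}(n)$, where $f_R(n):=\ell(R/(Q^{n+1})^*)$. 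Assuming $\bar R$ is again tight Buchsbaum of dimension $d-1$ with $\ell(\bar R/\bar Q^*)=\ell(R/Q^*)$, the inductive hypothesis for $\bar R$ and the hockey-stick identity $\sum_{k=0}^{n}\binom{k+d-2}{d-2}=\binom{n+d-1}{d-1}$, combined with the analogous recursion $\ell(R/Q^n)=\sum_{k=0}^n\ell(\bar R/\bar Q^{k})$ for ordinary powers of a superficial parameter, assemble $f_R(n)=\sum_{k=0}^n f_{\bar R}(k)$ into precisely \eqref{master}.

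The step I expect to be the main obstacle is the pair of tight-closure/quotient compatibilities, above all the colon relation $(Q^{n+1})^*:_Rx=(Q^n)^*$, because tight closure has no exact homological calculus and does not in general commute with passage to $R/xR$. I would attack it through colon-capturing for parameter ideals together with the existence of a parameter test element, using the finite length of $0^*_{H^d_{\fm}(R)}$ and the fact that the lower local cohomologies are $\fm$-torsion with $\fm H^i_{\fm}(R)=0$ for $i<d$ (a defining feature of Buchsbaum rings) to absorb all error terms. Controlling these error terms \emph{uniformly in} $n$, and simultaneously verifying that the tight Buchsbaum property and the value $\ell(R/Q^*)$ descend to the general hyperplane section $\bar R$, is the delicate point, and it is exactly where the tight Buchsbaum hypothesis — rather than mere unmixedness — must be used.
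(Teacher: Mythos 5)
Your formal reductions are correct, and they in fact mirror the structure of the real argument. Parts (2) and (3) do follow from part (1) by combining Theorem \ref{Hil Buchs} with the formula for $e_0(Q)-\ell(R/Q^*)$ recalled from \cite{MQ22}, and the whole statement (including validity for all $n\ge 0$, using $e_d(Q)=0$ from unmixedness) is equivalent to your master identity
\begin{equation*}
\ell(R/(Q^{n+1})^*)=\ell(R/Q^{n})+\binom{n+d-1}{d-1}\ell(R/Q^*)\qquad(n\ge 0).
\end{equation*}
Be aware that this paper does not prove the statement at all: it is quoted from \cite{DQV}, and the shape of the proof there is visible in Lemma \ref{Lem1} and in the proof of Lemma \ref{Lem2}. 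Since $Q^nQ^*\subseteq (Q^{n+1})^*$ and $\ell(Q^n/Q^nQ^*)=\binom{n+d-1}{d-1}\ell(R/Q^*)$ by Lemma \ref{Lem1}, your master identity is equivalent to the ideal-theoretic equality $(Q^{n+1})^*=Q^nQ^*$ for all $n\ge 0$; that equality is exactly what \cite{DQV} establishes for tight Buchsbaum rings (generalizing the Cohen--Macaulay case of \cite{GMV}), by working with the parameter ideals themselves and the uniformity over \emph{all} parameter ideals built into the tight Buchsbaum definition --- not by induction on dimension via hyperplane sections.

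The gap in your proposal is precisely where you predicted it, and it is fatal rather than merely delicate. Your induction needs (a) $(Q^{n+1})^*:_Rx=(Q^n)^*$, (b) $\bigl((Q^{n+1})^*+xR\bigr)/xR=(\bar Q^{n+1})^*$ in $\bar R=R/xR$, and (c) that $\bar R$ is again tight Buchsbaum with $\ell(\bar R/\bar Q^*)=\ell(R/Q^*)$; none of these is available. For (b): tight closure is notorious for not commuting with specialization, and there is no mechanism in the Buchsbaum setting forcing the tight closure computed in $R/xR$ to agree with the image of the one computed in $R$; ``absorbing error terms by colon capturing'' is not an argument one can actually run here. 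For (c): the hypothesis is not even meaningful in general, because tight Buchsbaum rings are unmixed by definition, whereas for a non-Cohen--Macaulay Buchsbaum ring with $H^1_{\fm}(R)\ne 0$ the long exact sequence of $0\to R\xrightarrow{x}R\to \bar R\to 0$ gives $H^0_{\fm}(\bar R)\cong (0:_{H^1_{\fm}(R)}x)=H^1_{\fm}(R)\ne 0$ (using $\fm H^1_{\fm}(R)=0$), so $\bar R$ is not unmixed. For (a): this colon relation is an unproved strengthening of colon capturing; even in the Cohen--Macaulay base case it is not how things go --- \cite{GMV} instead prove $(Q^{n+1})^*=Q^nQ^*$ directly, and note that Theorem \ref{CM case} as quoted gives only the $e_1^*$ criterion, not the full Hilbert function your base case needs. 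A further, smaller issue: your recursion $\ell(R/Q^n)=\sum_{k=0}^n\ell(\bar R/\bar Q^{k})$ requires $Q^{n+1}:x=Q^n$ for \emph{every} $n\ge 0$, which superficiality only guarantees for $n\gg 0$ and which needs proof in the non-Cohen--Macaulay Buchsbaum case. In short, you reduced the theorem correctly to its true core, but then replaced that core by an induction scheme whose every load-bearing step is unestablished; the viable route is to aim directly at $(Q^{n+1})^*=Q^nQ^*$ and then invoke Lemma \ref{Lem1} and Theorem \ref{Hil Buchs}.
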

Every tight Buchsbaum local ring is Buchsbaum. Thus the aim of this paper is to prove a generalization of the following result. 
\begin{corollary}
Let $(R, \frak m)$ be a tight Buchsbaum local ring that satisfies $(S_2)$ condition. Then $R$ is $F$-rational if and only if for some (and hence for any) parameter ideal $Q$ have $e_1^*(Q) = 0$
\end{corollary}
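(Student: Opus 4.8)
The plan is to read the equivalence directly off the explicit expression for $e_1^*(Q)$ recorded in part (2) of the tight Hilbert function theorem above. Since $R$ is tight Buchsbaum, that theorem applies, so for any parameter ideal $Q$ we have
$$e_1^*(Q)=\sum_{i=2}^{d-1} \binom{d-2}{i-2}\ell(H^i_{\frak m}(R)) + \ell(0^*_{H^d_{\frak m}(R)}).$$
The decisive observation is that the right-hand side is a sum of non-negative integers in which every binomial coefficient $\binom{d-2}{i-2}$ (for $2 \le i \le d-1$) is strictly positive, because $0 \le i-2 \le d-3 < d-2$. Hence $e_1^*(Q)=0$ for some, and therefore by this formula for every, parameter ideal $Q$ if and only if each summand vanishes, that is, $H^i_\m(R)=0$ for all $2 \le i \le d-1$ and $0^*_{H^d_\m(R)}=0$.

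For the implication ($\Leftarrow$) I would then close the low-degree gap that the formula leaves untouched, since it sees $H^i_\m(R)$ only for $i \ge 2$. As a tight Buchsbaum ring $R$ is unmixed, so $\m \notin \Ass R$ and $H^0_\m(R)=0$ (for $d \ge 1$); the $(S_2)$ hypothesis gives $\depth R \ge \min(2,d)$, whence $H^1_\m(R)=0$ when $d \ge 2$ (the cases $d \le 1$ are immediate, the sum being empty and $R$ already Cohen-Macaulay). Together with the vanishing of $H^i_\m(R)$ for $2 \le i \le d-1$ obtained above, this yields $H^i_\m(R)=0$ for all $i<d$, i.e. $R$ is Cohen-Macaulay. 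Being Cohen-Macaulay with $0^*_{H^d_\m(R)}=0$, $R$ is then $F$-rational, by the standard characterization that for a Cohen-Macaulay local ring a parameter ideal is tightly closed exactly when the zero submodule of $H^d_\m(R)$ is tightly closed.

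The converse ($\Rightarrow$) falls straight out of the same formula: if $R$ is $F$-rational it is Cohen-Macaulay, so $H^i_\m(R)=0$ for every $i<d$ and in particular for $2 \le i \le d-1$, while $F$-rationality forces $0^*_{H^d_\m(R)}=0$; substituting into the display gives $e_1^*(Q)=0$ for all $Q$. I expect no genuine obstacle once the tight Hilbert function theorem is available: the argument is simply a matter of reading off the vanishing of individual local cohomology modules and plugging the degree-$0$ and degree-$1$ gap with unmixedness and $(S_2)$. The only steps demanding care are the translation of the Serre condition $(S_2)$ into the local cohomology vanishing $H^0_\m(R)=H^1_\m(R)=0$, and the correct invocation of the Cohen-Macaulay characterization of $F$-rationality.
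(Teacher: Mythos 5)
Your proof is correct and follows exactly the route the paper intends: the corollary is read off from part (2) of the preceding theorem, since $e_1^*(Q)=\sum_{i=2}^{d-1}\binom{d-2}{i-2}\ell(H^i_{\frak m}(R))+\ell(0^*_{H^d_{\frak m}(R)})$ vanishes precisely when $H^i_{\frak m}(R)=0$ for $2\le i\le d-1$ and $0^*_{H^d_{\frak m}(R)}=0$, and unmixedness plus $(S_2)$ supply the vanishing of $H^0_{\frak m}(R)$ and $H^1_{\frak m}(R)$, giving Cohen--Macaulayness and then $F$-rationality by the standard characterization. This matches the argument behind the paper's citation of [DQV, Corollary 4.6], so there is nothing to add.
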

The distinction between Buchsbaum rings and tight Buchsbaum rings is subtle, and there exist Buchsbaum (or even Cohen-Macaulay) rings that fail to be tight Buchsbaum. For example, if $ R = \mathbb{F}_p [[X,Y, Z]]/(X^4 + Y^4 + Z^4)$ then it is a Cohen-Macaulay ring. However, $R$ is not tight Buchsbaum since, in this case, the tight closure of the zero submodule of the top local cohomology is not a $\mathbb{F}_p$-vector space.

\subsection{Limit closure of a parameter ideal} For the limit closure of parameter ideals we refer to \cite{CQ22, MQS}.
\begin{definition} Let $Q = (x_1, x_2, \ldots, x_d)$ be a parameter ideal in a Noetherian local ring $(R, \frak m)$. The {\it limit closure} of $Q$ is defined as
\[
Q^{\lim_R} = \bigcup_{n\geq 0} (x_1^{n+1}, \ldots, x_d^{n + 1}):_R (x_1\cdots x_d)^n.
\]
We will write $Q^{\lim}$ if $R$ is clear from the context.
\end{definition}
The limit closure $Q^{\lim}$ is independent of the choice of $x_1, x_2, \ldots, x_d$ since $Q^{\lim}/Q$ is the kernel of the natural map $R/Q \to H^d_{\frak m}(R)$.
\begin{remark}
\begin{enumerate}
\item (\cite[Lemma 3.3]{MQ22}) If $R$ is Buchsbaum, then for every parameter ideal $Q$ we have
$$\ell(Q^{\lim}/Q) = \sum_{i=0}^{d-1} \binom{d}{i} \ell(H^i_{\frak m}(R)).$$ 
\item Let $(R, \frak m)$ be an equidimensional local ring of characteristic $p$. We have $Q^{\lim} \subseteq Q^*$.
\end{enumerate}
\end{remark}
\section{The main results}

The following lemma follows from \cite[Theorem 8.20]{HH94} or \cite[Lemma 3.1]{DQV} that plays an important role for main result.
\begin{lemma}\label{Lem1} 
Let $(R,\fm)$ be an equidimensional excellent local ring of prime characteristic $p$ and $Q$ a parameter ideal. Then $Q^n/Q^nQ^* $ is a free $R/Q^*$-module of rank $\binom{n+d-1}{d-1}$.
\end{lemma}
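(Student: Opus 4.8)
The plan is to exhibit an explicit free $R/Q^*$-module surjecting onto $Q^n/Q^nQ^*$ and to prove the map is injective. Write $Q=(x_1,\dots,x_d)$, set $A=R/Q^*$ and $\mu=\binom{n+d-1}{d-1}$, the number of monomials $x^{\alpha}$ with $|\alpha|=n$. These monomials generate $Q^n$, so they induce a surjection $\psi\colon A^{\mu}\to Q^n/Q^nQ^*$ of $A$-modules; the whole content is that $\psi$ is an isomorphism. Since $Q^{n+1}=Q\cdot Q^n\subseteq Q^*Q^n=Q^nQ^*$, one checks $Q^n/Q^nQ^*\cong (Q^n/Q^{n+1})\otimes_{R/Q}(R/Q^*)$, so that the graded $A$-algebra $G=\bigoplus_{n\ge 0}Q^n/Q^nQ^*$ is the base change $\gr_Q(R)\otimes_{R/Q}A$. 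Thus the assertion for all $n$ is equivalent to saying that $G$ is the polynomial ring $A[X_1,\dots,X_d]$, i.e. that $\psi$ is injective in every degree.

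Injectivity of $\psi$ says precisely that any relation $\sum_{|\alpha|=n}c_{\alpha}x^{\alpha}\in Q^nQ^*$, with $c_{\alpha}\in R$, forces $c_{\alpha}\in Q^*$ for all $\alpha$. I would deduce this from tight-closure colon capturing, equivalently from the phantom acyclicity of the Koszul complex $K_\bullet(x_1,\dots,x_d;R)$ available for a system of parameters in an equidimensional excellent local ring of characteristic $p$, as in \cite{HH94}. Concretely, the module of syzygies of the degree-$n$ monomials should be shown to lie in the tight closure of the submodule of trivial (symmetric) syzygies, whose entries already lie in $Q$; since passing to tight closure keeps the entries inside $Q^*$, every syzygy becomes trivial after tensoring with $A$, and $\psi$ is injective. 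This is exactly the route of \cite[Lemma 3.1]{DQV}.

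As an alternative organization one could induct on $n$: the base case is the freeness $G_1=Q/QQ^*\cong A^d$, which follows from applying phantom acyclicity to $H_1(K_\bullet)$, namely every first syzygy of $x_1,\dots,x_d$ lies in the tight closure of the Koszul syzygies and hence has coefficients in $Q^*$. The inductive step, however, still requires controlling the higher syzygies of $Q^n$, so it does not close without the same phantom input; for that reason the uniform argument through \cite[Theorem 8.20]{HH94} is cleaner than induction.

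The main obstacle is exactly the translation in the second paragraph: phantom acyclicity is a statement about Koszul homology being annihilated by a test element — equivalently, about cycles lying in the tight closure of boundaries inside a free module — whereas what is needed is the concrete membership $c_{\alpha}\in Q^*$ of the coefficients of an arbitrary relation, uniformly across all degrees $n$. Making this precise means identifying the complex whose phantom acyclicity encodes the syzygies of $Q^n$ for every $n$, and verifying that the entries of the tight closure of the trivial syzygies remain in $Q^*$; this is the technical heart, and is precisely where one leans on \cite[Theorem 8.20]{HH94}. Once it is in place, surjectivity of $\psi$ together with the equality of ranks yields that $Q^n/Q^nQ^*$ is $A$-free of rank $\mu$.
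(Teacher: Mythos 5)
Your proposal is correct and takes essentially the same route as the paper: the paper offers no independent argument for this lemma, obtaining it entirely by citing \cite[Theorem 8.20]{HH94} or \cite[Lemma 3.1]{DQV}, which are exactly the sources your proof leans on for its technical heart (that the syzygies of the degree-$n$ monomials lie in the tight closure of the trivial syzygies). Your surrounding reductions---the surjection $\psi$, the identification of injectivity with the statement that $\sum_{|\alpha|=n} c_\alpha x^\alpha \in Q^nQ^*$ forces $c_\alpha \in Q^*$, and the entry-wise passage from trivial syzygies with entries in $Q$ to tight closures with entries in $Q^*$---are all sound.
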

\begin{lemma}\label{Lem2} 
Let $(R,\fm)$ be an equidimensional excellent local ring of prime characteristic $p$ and $Q$ a parameter ideal. Then 
$$e_1^*(Q)\geq e_0(Q)-\ell(R/Q^*)+e_1(Q).$$
\end{lemma}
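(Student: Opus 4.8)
The plan is to sandwich the ideal $(Q^{n+1})^*$ between $Q^{n+1}$ and $Q^n$ by inserting the intermediate ideal $Q^nQ^*$, compute the length of $R/Q^nQ^*$ exactly via Lemma \ref{Lem1}, and then read off the inequality by comparing Hilbert polynomials coefficient by coefficient.

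First I would record the inclusion $Q^nQ^* \subseteq (Q^{n+1})^*$. This follows from the product property of tight closure recalled in Section 2, namely $(Q^n)^*(Q)^* \subseteq (Q^{n+1})^*$, together with $Q^n \subseteq (Q^n)^*$; hence $Q^nQ^* \subseteq (Q^n)^*Q^* \subseteq (Q^{n+1})^*$. Consequently $\ell(R/(Q^{n+1})^*) \le \ell(R/Q^nQ^*)$ for every $n$. Next I would compute $\ell(R/Q^nQ^*)$ exactly. Since $Q^nQ^* \subseteq Q^n$, the short exact sequence $0 \to Q^n/Q^nQ^* \to R/Q^nQ^* \to R/Q^n \to 0$ gives $\ell(R/Q^nQ^*) = \ell(R/Q^n) + \ell(Q^n/Q^nQ^*)$. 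By Lemma \ref{Lem1} the module $Q^n/Q^nQ^*$ is free over $R/Q^*$ of rank $\binom{n+d-1}{d-1}$, so $\ell(Q^n/Q^nQ^*) = \binom{n+d-1}{d-1}\ell(R/Q^*)$. Thus for $n \gg 0$ both sides are eventually polynomial in $n$ and
$$\ell(R/(Q^{n+1})^*) \le \ell(R/Q^n) + \binom{n+d-1}{d-1}\ell(R/Q^*).$$

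Finally I would compare coefficients. Expanding the left-hand side via the tight Hilbert polynomial and $\ell(R/Q^n)$ via the Hilbert--Samuel polynomial of $Q$ (legitimate since $R$ is equidimensional excellent, so both functions are eventually polynomial), the degree-$d$ terms agree because $e_0^*(Q)=e_0(Q)$; here one uses Pascal's identity $\binom{n+d}{d}=\binom{n+d-1}{d}+\binom{n+d-1}{d-1}$ to align the two leading terms onto the same basis element $\binom{n+d-1}{d}$. After this cancellation, the difference of the two sides is a polynomial of degree at most $d-1$ that is nonnegative for all $n\gg0$, so the coefficient of $\binom{n+d-1}{d-1}$ in it is nonnegative. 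Writing that coefficient out gives $\big(\ell(R/Q^*)-e_1(Q)\big)-\big(e_0(Q)-e_1^*(Q)\big) \ge 0$, which rearranges to $e_1^*(Q)\geq e_0(Q)-\ell(R/Q^*)+e_1(Q)$, as claimed.

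The only point requiring care is the bookkeeping: one must track the index shift between $\ell(R/Q^{n+1})$ and $\ell(R/Q^n)$ and absorb the lower-order discrepancies between $\binom{n+d-1}{d-1}$ and $\binom{n+d-2}{d-1}$ into the degree-$\le d-2$ remainder, since only the top two coefficients are relevant to the conclusion. The substantive input — the exact length $\binom{n+d-1}{d-1}\ell(R/Q^*)$ of $Q^n/Q^nQ^*$ — is already furnished by Lemma \ref{Lem1}, so beyond this coefficient comparison no genuine obstacle remains.
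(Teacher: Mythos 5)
Your proposal is correct and follows essentially the same route as the paper's own proof: the same intermediate ideal $Q^nQ^*$ squeezed inside $(Q^{n+1})^*$, the same exact length computation $\ell(R/Q^nQ^*)=\ell(R/Q^n)+\ell(R/Q^*)\binom{n+d-1}{d-1}$ via Lemma \ref{Lem1}, and the same comparison of the top two Hilbert coefficients using Pascal's identity. The only differences are cosmetic (you spell out the inclusion $Q^nQ^*\subseteq (Q^{n+1})^*$ and the sign bookkeeping slightly more explicitly than the paper does).
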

\begin{proof}
For all large enough $n$ we have:
$$\ell(R/(Q^{n+1})^*)=e_0(Q)\binom{n+d}{d}-e_1^*(Q)\binom{n+d-1}{d-1}+ \text{ lower degree terms }.$$
Since $Q^nQ^*\subseteq (Q^{n+1})^*$ and by Lemma \ref{Lem1} we have 
\begin{align*}
\ell(R/(Q^{n+1})^*)&\leq \ell(R/Q^nQ^*)=\ell(R/Q^n)+ \ell(Q^n/Q^nQ^*)\\
                    &= \ell(R/Q^n)+\ell(R/Q^*)\binom{n+d-1}{d-1}.
\end{align*} 
Moreover
\begin{align*}
 &\ell(R/Q^n)\\
 &=e_0(Q)\binom{n+d-1}{d}-e_1(Q)\binom{n+d-2}{d-1}+ \cdots+ (-1)^de_d(Q)\\
 &=e_0(Q)\left(\binom{n+d}{d}-\binom{n+d-1}{d-1}\right)-e_1(Q)\left(\binom{n+d-1}{d-1}-\binom{n+d-2}{d-2}\right)+\cdots+ (-1)^de_d(Q)\\
 &= e_0(Q)\binom{n+d}{d}-(e_0(Q)+e_1(Q))\binom{n+d-1}{d-1}+ \text{ lower degree terms }. 
\end{align*}
Hence
$$\ell(R/(Q^{n+1})^*) \le e_0(Q)\binom{n+d}{d}-(e_0(Q)- \ell(R/Q^*)+e_1(Q))\binom{n+d-1}{d-1}+ \text{ lower degree terms }.$$
Therefore $e_1^*(Q)\geq e_0(Q)-\ell(R/Q^*)+e_1(Q)$.
\end{proof}
\begin{lemma}\label{Lem3} 
Let $(R,\fm)$ be a Buchsbaum local ring that satisfies $(S_2)$ condition and $Q$ a parameter ideal. Then 
$$ e_0(Q)-\ell(R/Q^\mathrm{lim})+e_1(Q)\ge 0.$$
Moreover, if the equality occurs for some $Q$ then $R$ is Cohen-Macaulay.
\end{lemma}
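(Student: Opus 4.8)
The plan is to reduce the inequality to a statement about the lengths of the local cohomology modules $H^i_{\fm}(R)$, which for a Buchsbaum ring all have finite length for $i<d$. Write $h_i := \ell(H^i_{\fm}(R))$. First I would rewrite $\ell(R/Q^{\lim}) = \ell(R/Q) - \ell(Q^{\lim}/Q)$, so that
\[
e_0(Q) - \ell(R/Q^{\lim}) + e_1(Q) = -\big(\ell(R/Q) - e_0(Q)\big) + \ell(Q^{\lim}/Q) + e_1(Q).
\]
Each of the three terms on the right has a closed Buchsbaum formula depending only on the $h_i$: the Remark following the definition of Buchsbaum rings gives $\ell(R/Q) - e_0(Q) = \sum_{i=0}^{d-1}\binom{d-1}{i}h_i$, the recalled formula for the limit closure gives $\ell(Q^{\lim}/Q) = \sum_{i=0}^{d-1}\binom{d}{i}h_i$, and Theorem \ref{Hil Buchs} gives $e_1(Q) = -\sum_{i=0}^{d-1}\binom{d-2}{i-1}h_i$. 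In particular the whole expression is manifestly independent of the choice of $Q$.

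Substituting and collecting the coefficient of each $h_i$ leaves a purely binomial identity. Applying Pascal's rule twice, first $\binom{d}{i} - \binom{d-1}{i} = \binom{d-1}{i-1}$ and then $\binom{d-1}{i-1} - \binom{d-2}{i-1} = \binom{d-2}{i-2}$, collapses the coefficient of $h_i$ to $\binom{d-2}{i-2}$, so that
\[
e_0(Q) - \ell(R/Q^{\lim}) + e_1(Q) = \sum_{i=2}^{d-1}\binom{d-2}{i-2}\,\ell(H^i_{\fm}(R)),
\]
where the edge terms $i=0,1$ drop out because $\binom{d-2}{i-2}=0$ there. Every summand is nonnegative, giving the asserted inequality at once. (This also recovers the Buchsbaum part of the formula for $e_1^*$ in the tight Buchsbaum theorem, which is a reassuring check.)

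For the equality statement, note that because the right-hand side is $Q$-free, equality for one $Q$ is the same as equality for all $Q$, and it holds precisely when $H^i_{\fm}(R) = 0$ for every $i$ with $2 \le i \le d-1$. The subtle point, and the reason $(S_2)$ is assumed, is that the expression is blind to cohomology in degrees $0$ and $1$, so vanishing of the middle cohomologies alone does not yet force Cohen-Macaulayness. Here $(S_2)$ supplies exactly the missing input: applied at $\fm$ it yields $\depth R \ge \min(2, d)$, hence $H^0_{\fm}(R) = H^1_{\fm}(R) = 0$ once $d \ge 2$, while the cases $d \le 1$ are immediate since then $\depth R \ge \min(2,d)=d$. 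Combining this with the vanishing in the range $2 \le i \le d-1$ gives $H^i_{\fm}(R) = 0$ for all $i < d$, that is, $R$ is Cohen-Macaulay. The main care needed is thus not in any hard estimate but in the conceptual placement of $(S_2)$ to close the gap at the two lowest degrees, together with routine bookkeeping of the degenerate small-$d$ cases.
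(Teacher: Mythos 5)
Your proof is correct and follows essentially the same route as the paper: both express $e_0(Q)-\ell(R/Q^{\lim})+e_1(Q)$ via the Buchsbaum formulas for $\ell(R/Q)-e_0(Q)$, $\ell(Q^{\lim}/Q)$, and $e_1(Q)$, collapse the binomial coefficients to obtain $\sum_{i=2}^{d-1}\binom{d-2}{i-2}\ell(H^i_{\fm}(R))\ge 0$, and then invoke $(S_2)$ to kill $H^0_{\fm}(R)$ and $H^1_{\fm}(R)$ in the equality case. Your treatment is, if anything, slightly more explicit than the paper's (spelling out Pascal's rule and the low-dimensional cases), but it is the same argument.
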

\begin{proof}
Since $R$ is Buchsbaum, by Theorem \ref{Hil Buchs} for all parameter ideals $Q$ we have
$$e_1(Q)=-\sum\limits_{i=1}^{d-1}\binom{d-2}{i-1}\ell(H^i_{\fm}(R)).$$
Moreover,
$$e_0(Q)-\ell(R/Q^\mathrm{lim})= \sum\limits_{i=1}^{d-1}\binom{d-1}{i-1}\ell(H^i_{\fm}(R)).$$
Therefore
$$e_0(Q)-\ell(R/Q^\mathrm{lim})+e_1(Q)=\sum\limits_{i=1}^{d-1}\binom{d-2}{i-2}\ell(H^i_{\fm}(R))\ge 0$$
for all $Q$. The equality occurs iff $H_{\fm}^i(R)=0$ for all $i=2,\ldots,d-1$. Moreover, the if ring is $(S_2)$ then $H^0_{\fm}(R)=H^1_{\fm}(R)=0$. Hence, $R$ is Cohen-Macaulay.
\end{proof}
The main result of this paper is as follows.
\begin{theorem}\label{MainTheorem} 
Let $(R,\fm)$ be an excellent reduced Buchsbaum local ring of prime characteristic $p$ that satisfies $(S_2)$ condition and $Q$ a parameter ideal. Then $R$ is $F$-rational iff $e_1^*(Q)=0$.
\end{theorem}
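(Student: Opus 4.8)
The plan is to deduce both implications from the Cohen--Macaulay case, which is Theorem \ref{CM case}, by using the numerical inequalities of Lemmas \ref{Lem2} and \ref{Lem3} to force Cohen--Macaulayness out of the vanishing $e_1^*(Q)=0$.

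The forward implication is immediate: if $R$ is $F$-rational then, being excellent, it is Cohen--Macaulay (a standard consequence of the Hochster--Huneke theory), and Theorem \ref{CM case} gives $e_1^*(Q)=0$ for every parameter ideal $Q$.

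The content lies in the converse, so assume $e_1^*(Q)=0$. Since $R$ is Buchsbaum (hence equidimensional), the containment $Q^{\lim}\subseteq Q^*$ holds, whence $\ell(R/Q^*)\le\ell(R/Q^{\lim})$. Combining Lemma \ref{Lem2} with this estimate and then with Lemma \ref{Lem3}, I would assemble the chain
\[
0=e_1^*(Q)\ \ge\ e_0(Q)-\ell(R/Q^*)+e_1(Q)\ \ge\ e_0(Q)-\ell(R/Q^{\lim})+e_1(Q)\ \ge\ 0.
\]
Because the extreme terms are both $0$, every inequality collapses to an equality; in particular $e_0(Q)-\ell(R/Q^{\lim})+e_1(Q)=0$. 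The equality clause of Lemma \ref{Lem3} then forces $R$ to be Cohen--Macaulay. Having upgraded $R$ to a Cohen--Macaulay ring still satisfying $e_1^*(Q)=0$, I invoke Theorem \ref{CM case} once more to conclude that $R$ is $F$-rational.

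The main obstacle is not any single computation --- the three preparatory lemmas do the heavy lifting --- but rather recognising that the vanishing of $e_1^*(Q)$ squeezes the whole chain to equalities, and that the equality case of Lemma \ref{Lem3} is precisely the bridge from the hypothesis to Cohen--Macaulayness, after which the already-known Cohen--Macaulay criterion applies verbatim. The one point requiring care is ensuring that the single quantity $e_0(Q)-\ell(R/Q^{\lim})+e_1(Q)$ genuinely appears at both ends of the squeeze, so that the equality is transferred back to Lemma \ref{Lem3} rather than merely to the weaker bound of Lemma \ref{Lem2}.
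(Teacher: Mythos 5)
Your proposal is correct and follows essentially the same route as the paper: the squeeze $0 = e_1^*(Q) \ge e_0(Q)-\ell(R/Q^*)+e_1(Q) \ge e_0(Q)-\ell(R/Q^{\lim})+e_1(Q) \ge 0$ via Lemmas \ref{Lem2} and \ref{Lem3} and the containment $Q^{\lim}\subseteq Q^*$, then the equality case of Lemma \ref{Lem3} to get Cohen--Macaulayness, and finally Theorem \ref{CM case} to conclude $F$-rationality. Your explicit treatment of the forward implication (via $F$-rational $\Rightarrow$ Cohen--Macaulay and Theorem \ref{CM case}) is the same reduction the paper leaves implicit in its opening sentence.
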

\begin{proof} We need only to prove that if $e_1^*(Q) = 0$ then $R$ is $F$-rational.
By Lemma \ref{Lem2}, 
$$e_1^*(Q)\ge e_0(Q)-\ell(R/Q^*)+e_1(Q).$$
Since $Q^\mathrm{lim} \subseteq Q^*$,
 $$e_0(Q)-\ell(R/Q^*)+e_1(Q) \ge e_0(Q)-\ell(R/Q^\mathrm{lim})+e_1(Q)\ge 0.$$
Thus 
$$e_0(Q)-\ell(R/Q^\mathrm{lim})+e_1(Q)=0.$$ 
By Lemma \ref{Lem3}, $R$ is Cohen-Macaulay. By Theorem \ref{CM case}, see also \cite[Corollary 4.9]{HM}, we have $R$ is $F$-rational.
\end{proof}
The main result of this paper claims that the two below questions have affirmative answers for Buchsbaum rings.
\begin{question} Let $(R, \frak m)$ be an excellent reduced and equidimensional local ring of prime characteristic $p$ that satisfies the $(S_2)$ condition. Is it true that for any parameter ideal $Q$ we have $e_0(Q)-\ell(R/Q^*)+e_1(Q) \ge 0$. Moreover, $R$ is $F$-rational iff for some (and hence for all) parameter ideal Q we have $e_0(Q)-\ell(R/Q^*)+e_1(Q) = 0$. 
\end{question}
\begin{question} Let $(R, \frak m)$ be a local ring that satisfying the $(S_2)$ condition. Is it true that $e_0(Q)-\ell(R/Q^{\lim})+e_1(Q) \ge 0$ for any parameter ideal $Q$. Moreover, $R$ is Cohen-Macaulay iff for some (and hence for all) parameter ideal $Q$ we have $e_0(Q)-\ell(R/Q^{\lim})+e_1(Q) = 0$.
\end{question}
We next show that the $(S_2)$ condition can not be removed in Theorem \ref{MainTheorem}. First, we recall some facts about $S_2$-ification of the ring by \cite{HH94}. Suppose $R$ is an unmixed local ring. We shall say that a ring $S$ is an $S_2$-ification of $R$ if it lies between $R$ and its total quotient ring, is module-finite over $R$, is
$(S_2)$ as an $R$-module, and has the property that for every element $s\in S\setminus R$, the ideal $D(s)$,
defined as $\{r \in R \mid rs \in R\}$, has height at least two. If $(R,\fm)$ is an unmixed ring then $R$ has a $S_2$-ification and this is unique (up to an isomorphism).\\
Recall that let $R, \frak m$ be a Noetherian local ring and $I$ be an ideal of $R$. An element $x \in I$ is called superficial for $I$ if there exists an integer $c$ such that for all $n \ge c$, the following equality holds:
 $$(I^{n+1}:x) \cap I^c =I^n.$$

\begin{proposition}\label{Pro1} 
Let $(R,\fm)$ be an excellent, unmixed local ring of dimension $2$, $Q=(x,y)$ a parameter ideal.  
\begin{enumerate}
\item Then
$$e_0(Q)-\ell(R/Q^\mathrm{lim})=\ell\left(\frac{H^1_{\fm}(R)}{(x,y)H^1_{\fm}(R)}\right).$$
\item If $x$ is a superficial element then we have 
$$e_0(Q)-\ell(R/Q^\mathrm{lim})+e_1(Q)=\ell\left(\frac{H^1_{\fm}(R)}{(x,y)H^1_{\fm}(R)}\right)-\ell\left(\frac{H^1_{\fm}(R)}{(x)H^1_{\fm}(R)}\right).$$
Therefore $e_0(Q)-\ell(R/Q^\mathrm{lim})+e_1(Q) \le 0$ for any parameter ideal $Q$. Moreover, if $Q H^1_{\fm}(R) = 0$ (in this case $Q$ is standard), then $e_0(Q)-\ell(R/Q^\mathrm{lim})+e_1(Q) = 0$.
\end{enumerate}
\end{proposition}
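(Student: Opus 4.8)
\textit{Overall strategy.} The plan is to exploit the $S_2$-ification $S$ of $R$ recalled just above, converting statements about the non-Cohen--Macaulay ring $R$ into statements about the Cohen--Macaulay ring $S$ plus a finite-length correction. First I would record the structural consequences of unmixedness in dimension $2$: since $R$ has no embedded primes, $R_\fp$ is Cohen--Macaulay for every $\fp$ with $\operatorname{ht}\fp\le 1$, so $H^0_\fm(R)=0$, every parameter is a nonzerodivisor, and $H^1_\fm(R)$ has finite length. Writing $C=S/R$, the condition that $D(s)$ has height $\ge 2$ for each $s\in S\setminus R$ forces $\Supp C\subseteq\{\fm\}$, so $C$ has finite length; since $S$ is $(S_2)$ of dimension $2$ it is Cohen--Macaulay, hence $\depth_\fm S\ge 2$, and the long exact sequence of local cohomology attached to $0\to R\to S\to C\to 0$ yields $C\cong H^1_\fm(R)$ together with $H^2_\fm(R)\cong H^2_\fm(S)$.

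\textit{Part (1).} Because $S$ is a maximal Cohen--Macaulay $R$-module, $x,y$ is a regular sequence on $S$, and a direct colon computation shows $(QS)^{\lim_S}=QS$; equivalently the natural map $S/QS\to H^2_\fm(S)$ is injective. By naturality of the fundamental-class map under $R\hookrightarrow S$, the map $R/Q\to H^2_\fm(R)$ factors as $R/Q\to S/QS\hookrightarrow H^2_\fm(S)\cong H^2_\fm(R)$, whence $Q^{\lim}=QS\cap R$ and $R/Q^{\lim}\cong\operatorname{Im}(R\to S/QS)$. This produces a short exact sequence $0\to R/Q^{\lim}\to S/QS\to C/QC\to 0$. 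Taking lengths and using $\ell(S/QS)=e(Q;S)=e_0(Q)$ (additivity of multiplicity along $0\to R\to S\to C\to 0$ and $\dim C=0$) gives $e_0(Q)-\ell(R/Q^{\lim})=\ell(C/QC)=\ell(H^1_\fm(R)/(x,y)H^1_\fm(R))$, which is (1).

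\textit{Part (2).} By (1) it suffices to prove $e_1(Q)=-\ell(H^1_\fm(R)/(x)H^1_\fm(R))$. Since $x$ is a superficial nonzerodivisor, $(Q^{n+1}:x)=Q^n$ for $n\gg 0$, so $0\to R/Q^n\xrightarrow{x}R/Q^{n+1}\to \overline{R}/Q^{n+1}\overline{R}\to 0$ is exact, where $\overline{R}=R/xR$; comparing Hilbert polynomials gives $e_1(Q;R)=e_1(y\overline{R};\overline{R})$. For the one-dimensional ring $\overline{R}$, splitting off $H^0_\fm(\overline{R})$ gives $e_1(y\overline{R};\overline{R})=-\ell(H^0_\fm(\overline{R}))$, and the long exact sequence of $0\to R\xrightarrow{x}R\to \overline{R}\to 0$ (with $H^0_\fm(R)=0$) identifies $H^0_\fm(\overline{R})=0:_{H^1_\fm(R)}x$. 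As $H^1_\fm(R)$ has finite length, $\ell(0:_{H^1_\fm(R)}x)=\ell(H^1_\fm(R)/xH^1_\fm(R))$, giving the displayed formula. The inequality $e_0(Q)-\ell(R/Q^{\lim})+e_1(Q)\le 0$ is then immediate from $(x)H^1_\fm(R)\subseteq(x,y)H^1_\fm(R)$, and when $QH^1_\fm(R)=0$ both terms equal $\ell(H^1_\fm(R))$, so the expression vanishes (and such $Q$ is standard).

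\textit{Main obstacle.} The substantive point is the passage through the $S_2$-ification: identifying $Q^{\lim}=QS\cap R$ rests on the colon identity $(QS)^{\lim_S}=QS$ for the regular sequence $x,y$ on $S$ and on the naturality of the map to top local cohomology, after which only multiplicity bookkeeping remains. The other delicate ingredient, used in both parts, is that $H^1_\fm(R)$ has finite length; this is exactly what unmixedness buys in dimension $2$, and it is precisely what makes $\ell(0:_{H^1_\fm(R)}x)=\ell(H^1_\fm(R)/xH^1_\fm(R))$ hold.
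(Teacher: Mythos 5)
Your proof is correct and follows essentially the same route as the paper: the $S_2$-ification $S$ with $S/R\cong H^1_\fm(R)$ and the exact sequence $0\to R/(QS\cap R)\to S/QS\to (S/R)/Q(S/R)\to 0$ for part (1), and reduction modulo a superficial element together with the identification $H^0_\fm(R/xR)\cong (0:_{H^1_\fm(R)}x)$ and finite length of $H^1_\fm(R)$ for part (2). The only real difference is that where the paper cites \cite[Theorem 6.2]{CQ22} for $Q^{\lim}=QS\cap R$, you derive it directly from $(QS)^{\lim_S}=QS$ (a colon computation for the regular sequence $x,y$ on the Cohen--Macaulay module $S$) and the naturality of the maps to $H^2_\fm$, which is a valid self-contained substitute for that citation.
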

\begin{proof}
(1) The first assertion is due to \cite[Theorem 1.4]{CQ22}, however in order to be more convenient, we show again the proof of the assertion.
Let $S$ is $S_2$-ification of $R$. Set $T=S/R$, then $\mathrm{dim}(S)=\mathrm{dim}(R)=2$. Since $S$ is $(S_2)$ and $\mathrm{dim}(T)\le 0$ so $S$ is Cohen-Macaulay and $T$ is $\fm$-torsion. The short exact sequence
$$0\to R \to S \to T \to 0$$
induces the following exact sequence
$$H^0_{\fm}(S) \to H^0_{\fm}(T) \to H^1_{\fm}(R) \to H^1_{\fm}(S). $$
Hence $T=H^1_{\fm}(R)$. The short exact sequence
$$0\to R \to S \to T \to 0$$
also leads to the following short exact sequence
$$0 \to R/(QS\cap R) \to S/QS \to T/QT \to 0. $$
By \cite[ Theorem 6.2]{CQ22}, $Q^\mathrm{lim}=Q^{\mathrm{lim}_S}\cap R= QS\cap R$ since $S$ is Cohen-Macaulay.
Putting all together we have
\begin{align*}
\ell(R/Q^\mathrm{lim})&= \ell(S/QS)-\ell\left(\frac{H^1_{\fm}(R)}{QH^1_{\fm}(R)}\right)\\
&=e_0(Q,S)-\ell\left(\frac{H^1_{\fm}(R)}{QH^1_{\fm}(R)}\right)\\
&=e_0(Q,R)-\ell\left(\frac{H^1_{\fm}(R)}{QH^1_{\fm}(R)}\right).
\end{align*}

(2)
Set $R_1=R/(x)$, $\overline{R}_1=R_1/H_{\fm}^0(R_1)$ then $\mathrm{dim}(R_1)=\mathrm{dim}(\overline{R}_1)=1$, and $\overline{R}_1$ is Cohen-Macaulay. Consider the short exact sequence
$$0 \to H_{\fm}^0(R_1) \to R_1 \to \overline{R}_1 \to 0. $$
Since $H_{\fm}^0(R_1)$ is $\fm$-torsion then for large $n$
$$y^nR_1\cap H_{\fm}^0(R_1)=y^n(H_{\fm}^0(R_1):y^n)=y^nH_{\fm}^0(R_1)=0.$$
 So 
$$0 \to H_{\fm}^0(R_1) \to R_1/y^n R_1 \to \overline{R}_1/y^n\overline{R}_1 \to 0. $$
for $n\gg 0$. We have 
$$\ell(R_1/y^nR_1)-\ell(\overline{R}_1/y^n\overline{R}_1)=\ell\left( H_{\fm}^0(R_1) \right).$$
For large $n$, 
$$e_0(QR_1,R_1)\binom{n}{1}-e_1(QR_1)-e_0(Q\overline{R}_1)\binom{n}{1}=\ell(H_{\fm}^0(R_1)).$$
In other words, $e_1(QR_1, R_1)=-\ell(H_{\fm}^0(R_1))$. Because of superficial property of $x$, 
$e_1(QR_1,R_1)=e_1(Q,R)$ so $e_1(Q,R)=-\ell(H_{\fm}^0(R_1))$.
 The short exact sequence
$$0 \to R \xrightarrow{.x} R \to R_1 \to 0,$$
induces the following 
$$0 \to H_{\fm}^0(R_1) \to H^1_{\fm}(R) \xrightarrow{.x} H^1_{\fm}(R)\to \frac{H^1_{\fm}(R)}{xH^1_{\fm}(R)}\to 0. $$
Thus $$\ell\left(H^0_{\fm}(R_1)\right)=\ell\left((0:x)_{H^1_{\fm}(R)}\right)=\ell\left(\frac{H^1_{\fm}(R)}{xH^1_{\fm}(R)} \right).$$ 
Therefore, $e_1(Q,R)=-\ell\left(\frac{H^1_{\fm}(R)}{xH^1_{\fm}(R)} \right)$. Hence
$$e_0(Q)-\ell(R/Q^\mathrm{lim})+e_1(Q)=\ell\left(\frac{H^1_{\fm}(R)}{(x,y)H^1_{\fm}(R)}\right)-\ell\left(\frac{H^1_{\fm}(R)}{(x)H^1_{\fm}(R)}\right).$$
The last assertion is clear. The proof is complete.
\end{proof}
\begin{remark}
If $(R, \frak m)$ is a Buchsbaum unmixed local ring of dimension two, then $e_0(Q)-\ell(R/Q^\mathrm{lim})+e_1(Q) = 0$ for any parameter ideal $Q$.
\end{remark}
We provide some concrete examples. The first example is closely related to Proposition 5.3 of \cite{DQV}. 
\begin{example}
Let $S=k[[X,Y,Z,T]]$ be the formal power series of four variables. Let $I=(X,Y)^2$, $J=(Z,T)^2$ and $R=S/I\cap J$. Let $x, y, z, w$ be the image of $X, Y, Z, W$ in $R$. Notice that $S/I$ and $S/J$ are Cohen-Macaulay of dimension $\mathrm{dim}(S/I)=\mathrm{dim}(S/J)=2$. Since $R$ is unmixed of dimension $\mathrm{dim}(R)=2$, so $R$ is generalized Cohen-Macaulay. Let $\fm=(x,y,z,t)R$ be the maximal ideal of $R$.
The short exact sequence $$0 \to S/(I\cap J) \to S/I\oplus S/J \to S/(I+J) \to 0$$ induces the following exact sequence
$$H^0_{\fm}(S/I)\oplus H^0_{\fm}(S/J)\to H^0_{\fm}(S/(I+J)) \to H^1_{\fm}(S/I\cap J)\to H^1_{\fm}(S/I)\oplus H^1_{\fm}(S/J).$$
Since $S/I$ and $S/J$ are Cohen-Macaulay of dimension $2$ we have
$$H^1_{\fm}(R)\cong H^0_{\fm}(S/(I+J)) \cong S/(I+J)=k[[X,Y,Z,T]]/((X,Y)^2+(Z,T)^2).$$
Hence $\fm H^1_{\fm}(R)\neq 0$, the ring is not Buchsbaum.
It is not difficult to check that $Q=(x+z,y+t)R$ is a parameter ideal of $R$. Let $a$ be a superficial element of $R$ with respect to $Q$. Notice that $\ell(S/(I + J + (a))) \ge 4$. Therefore, by Proposition \ref{Pro1} we have 
$$e_0(Q)-\ell(R/Q^\mathrm{lim})+e_1(Q)=\ell\left(\frac{H^1_{\fm}(R)}{QH^1_{\fm}(R)}\right)-\ell\left(\frac{H^1_{\fm}(R)}{aH^1_{\fm}(R)}\right)\le 3-4< 0.$$
Hence, the $(S_2)$ condition is important for Lemma \ref{Lem3}, and hence for Theorem \ref{MainTheorem}.
\end{example}
\begin{example}
Let $R=\mathbb{F}_p[[X^5,X^4Y,XY^4,Y^5]]$ with the maximal ideal $\fm=(X^5,X^4Y,XY^4,Y^5)$ whose $S_2$-ification is $S=\mathbb{F}_p[[X^5,X^4Y,X^3Y^2,X^2Y^3,XY^4,Y^5]]$, $\mathrm{dim}(R)=\mathrm{dim}(S)=2$. It should be note that $S$ is a Veronese subring of the local regular ring $\mathbb{F}_p[[X,Y]]$,  $S$ is $F$-rational so that $H^0_{\fm}(S)=H^1_{\fm}(S)=0$ and $0^*_{H^2_{\fm}(S)}=0$. Set $N=S/R$ then 
$$N\cong \mathbb{F}_pX^2Y^3 \oplus \mathbb{F}_pX^3Y^2 \oplus \mathbb{F}_pX^3Y^7 \oplus \mathbb{F}_pX^7Y^3.$$
The short exact sequence $0\to R \to S \to N \to 0$ induces the following exact sequence
$$0\to H^0_{\fm}(R) \to H^0_{\fm}(S) \to H^0_{\fm}(N) \to H^1_{\fm}(R) \to H^1_{\fm}(S) \to $$
$$\to H^1_{\fm}(N)\to H^2_{\fm}(R)\to H^2_{\fm}(S)\to H^2_{\fm}(N).$$
Since $N$ has finite length so that it is $\fm$-tosion and $S$ is $F$-rational then 
$$H^0_{\fm}(R)=0,\quad H^1_{\fm}(R)\cong H^0_{\fm}(N)\cong N,\quad  H^2_{\fm}(R)\cong H^2_{\fm}(S).$$
Moreover, $\fm H^1_{\fm}(R)\neq 0$ since $X^5.X^2Y^3=X^7Y^3\neq 0$ in $H^1_{\fm}(R)$. Hence $R$ is not Buchsbaum but it is generalized Cohen-Macaulay. We have $f=aX^5+bY^5$ with $a,b\in \mathbb{F}_p$ and $a,b\neq 0$ is a superficial element of $R$ and $Q=(X^5,Y^5)=(f, Y^5)$ is a parameter ideal of $R$. It is obvious that 
$$(f)H^1_{\fm}(R) \cong \mathbb{F}_pX^7Y^3 \oplus \mathbb{F}_pX^3Y^7 \cong QH^1_{\fm}(R).$$
Since $Q^*/Q^{\mathrm{lim}}\subseteq 0^*_{H^2_{\fm}(R)} \cong 0^*_{H^2_{\fm}(S)} \cong 0$ and by Proposition \ref{Pro1},
\begin{align*}
e_0(Q)-\ell(R/Q^*)+e_1(Q)&=e_0(Q)-\ell(R/Q^\mathrm{lim})+e_1(Q)\\
&=\ell\left(\frac{H^1_{\fm}(R)}{QH^1_{\fm}(R)}\right)-\ell\left(\frac{H^1_{\fm}(R)}{(f)H^1_{\fm}(R)}\right)\\
&=2-2=0.
\end{align*}
Moreover $S$ is a module finite over $R$, we have $(Q^n)^* = (Q^nS)^* \cap R$. On the other hand, $S$ is $F$-rational (in fact, $F$-regular), we have 
$(Q^n)^* = Q^nS \cap R$ for all $n$. Thus we get the following short exact sequence 
$$ 0 \to R/(Q^{n+1})^* \to S/Q^{n+1}S \to N \to 0$$
for all $n \gg 0$. Hence $\ell(R/(Q^{n+1})^*) = e_0(Q) \binom{n+2}{2} - \ell(N) = e_0(Q) \binom{n+2}{2} - 4$. Hence $e_1^*(Q) = 0$ but $R$ is not $F$-rational.
\end{example}

\begin{remark} The main results of this paper may hold in mixed characteristic when $e_1^*(Q)$ is replaced with $e_1^B(Q)$ for any big Cohen-Macaulay ring $B$ of $R$. Additionally, it would be interesting to study the analogue of tight Buchsbaum rings in mixed characteristic which we refer to as {\it BCM-Buchsbaum rings}.
\end{remark}

\end{document}